\newcommand{\sg}{\textnormal{sg}}
\newtheorem{theorem}{Theorem}
\newtheorem{lemma}[theorem]{Lemma}
\newtheorem{corollary}[theorem]{Corollary}
\newtheorem{proposition}[theorem]{Proposition}
\theoremstyle{remark}
\theoremstyle{definition}
\numberwithin{theorem}{section} 
\numberwithin{equation}{section}
\numberwithin{example}{section}
\newcommand{\ch}{{\text {\rm ch}}}
\newcommand{\mult}{{\text {\rm mult}}}
\newcommand{\Z}{\mathbb{Z}}
\renewcommand{\Im}{\text {\rm Im}}
\title{On string functions and double-sum formulas}
\begin{document}

\date{12 July 2021}

\subjclass[2010]{11B65, 11F27}

\keywords{Hecke-type double-sums, string functions, theta functions, affine Lie algebras}

\begin{abstract}
String functions are important building blocks of characters of integrable highest modules over affine Kac--Moody algebras. Kac and Peterson computed  string functions for affine Lie algebras of type $A_{1}^{(1)}$ in terms of Dedekind eta functions.  We produce new relations between string functions by writing them as double-sums and then using certain symmetry relations. We evaluate the series using special double-sum formulas that express Hecke-type double-sums in terms of Appell--Lerch functions and theta functions, where we point out that Appell--Lerch functions are the building blocks of Ramanujan's classical mock theta functions. 
\end{abstract}

\author{Eric T. Mortenson}
\address{Department of Mathematics and Computer Science, Saint Petersburg State University, Saint Petersburg, 199034, Russia}
\email{etmortenson@gmail.com}

\author{Olga Postnova}
\address{Euler International Mathematical Institute, Laboratory of Mathematical Problems of Physics, Saint Petersburg Department of Steklov Institute of Mathematics, Saint Petersburg, Fontanka river emb. 27,
191023 Saint Petersburg,
Russia}
\email{postnova.olga@gmail.com}

\author{Dmitry Solovyev}
\address{Department of Physics, Saint Petersburg State University,  Ulyanovkaya str.1, Saint Peterburg, Russia; Department of Mathematics, Indiana University, Bloomington, IN 47405, USA}
\email{dimsol42@gmail.com}
\maketitle
\setcounter{section}{-1}
\setcounter{tocdepth}{1}  


\section{Notation}\label{section:notation}
 Let $q$ be a complex number where $q:=e^{2\pi i \tau}$ and $\tau\in\mathfrak{H}:=\{z\in \mathbb{C} | \Im{z}>0\}$.  Define $\mathbb{C}^*:=\mathbb{C}-\{0\}$.  We recall some basic notation:
\begin{gather*}
(x)_n=(x;q)_n:=\prod_{i=0}^{n-1}(1-q^ix), \ \ (x)_{\infty}=(x;q)_{\infty}:=\prod_{i= 0}^{\infty}(1-q^ix),\\
{\text{and }}\ \ j(x;q):=\sum_{n=-\infty}^{\infty}(-1)^nq^{\binom{n}{2}}x^n=(x)_{\infty}(q/x)_{\infty}(q)_{\infty},
\end{gather*}
where in the last line the equivalence of product and sum follows from Jacobi's triple product identity.  We note that $j(q^n;q)=0$ for $n\in \mathbb{Z}.$  Let $a$ and $m$ be integers with $m$ positive.  Define
\begin{gather*}
J_{a,m}:=j(q^a;q^m), \ \ \overline{J}_{a,m}:=j(-q^a;q^m), \ J_m:=J_{m,3m}=\prod_{i= 1}^{\infty}(1-q^{mi}),
\end{gather*}
and
\begin{equation*}
\eta(\tau):=q^{\tfrac{1}{24}}\prod_{n=1}^{\infty}(1-q^{n}).
\end{equation*}
We will use the following definition of an Appell--Lerch function.   Let $x,z\in\mathbb{C}^*$ with neither $z$ nor $xz$ an integral power of $q$. Then
\begin{equation}
m(x,q,z):=\frac{1}{j(z;q)}\sum_r\frac{(-1)^rq^{\binom{r}{2}}z^r}{1-q^{r-1}xz}.\label{equation:mxqz-def}
\end{equation}
We define our Hecke-type double-sum as follows.  Let $x,y\in\mathbb{C}^*$, then
\begin{equation}
f_{a,b,c}(x,y,q):=\Big ( \sum_{r,s\ge 0 }-\sum_{r,s<0}\Big )(-1)^{r+s}x^ry^sq^{a\binom{r}{2}+brs+c\binom{s}{2}},\label{equation:fabc-def}
\end{equation}
which has the alternate form
\begin{equation}
f_{a,b,c}(x,y,q)
=\sum_{\substack{r,s\in\mathbb{Z}\\ \sg(r)=\sg(s)}}\sg(r)(-1)^{r+s}x^ry^sq^{a\binom{r}{2}+brs+c\binom{s}{2}},
\end{equation}
where
\begin{equation}
\sg(r):=\begin{cases}
1 & \textup{if} \ r\ge0,\\
-1 & \textup{if} \  r<0.
\end{cases}
\end{equation}

\section{Introduction}\label{section:introduction}
 The goal of this paper is to find new formulae for string functions for affine Kac--Moody algebras. Let us start with brief reminder of what a string function is in this case. For details, see \cite{Kac, KP}.

\smallskip
For the affine Kac--Moody algebra $\mathfrak{g}=A_1^{(1)}$ the simple roots are $\alpha_0=\delta-\alpha$ and $\alpha_1=\alpha$, where $\alpha$ is the root of $A_1$ Lie subalgebra and $\delta$ is the imaginary root. Denote by $\Lambda_0$ and $\Lambda_1$ corresponding fundamental weights. 
Denote by $P_+$ the subset of integral dominant weights in the weight lattice $\mathbb{Z}\Lambda_0\bigoplus\mathbb{Z}\Lambda_1,\;\; P_+=\{a\Lambda_0+b\Lambda_1, a,b\in\mathbb{Z}_{\geq 0}\}$.

\smallskip
Let $L(\Lambda), \Lambda\in P_+$  be  an irreducible $\mathfrak{g}$-module with highest weight $\Lambda\in P_+$ of level $N$:
\begin{equation*}
\Lambda=(N-\ell)\Lambda_0+\ell\Lambda_1.
\end{equation*}
Define the character  of $L(\Lambda)$ as a function
\begin{equation*}
\ch_{L(\Lambda)}(z,q)=\sum_{\lambda\in P(\Lambda)\subset \mathfrak{h}^{*}}\mult_{\Lambda}(\lambda)q^Az^{A-B},
\end{equation*}
Here a sum is taken over all weights of a module, these are weights  that occur in $L(\lambda)$, i.e
$\lambda=\Lambda-A\alpha_0-B\alpha_1$, where $A,B\in \mathbb{Z}_{\geq 0}$.   

\smallskip
It is possible to express the character via the subset of weights of a module called maximal weights. A weight $\lambda\in P(\Lambda)$ is called \textit{maximal} if $\lambda+\delta\not\in P(\Lambda)$. We will parametrize maximal weights of the module as
\begin{equation*}
\lambda=(N-m)\Lambda_0+m\Lambda_1.
\end{equation*}
For a maximal weight $\lambda \in P(\Lambda)$ the \textit{string function} of $\lambda$  is defined as
\begin{equation}
\label{strfun}
c^\Lambda_{\lambda}(q)=c^{N-\ell,\ell}_{N-m,m}(q):=q^{ s_{\Lambda}(\lambda)}\sum_{n\geq 0} \mult_{\Lambda}(\lambda-n\delta)q^{n}=: C_{m,\ell}^N(q),
\end{equation}
where
\begin{equation}
s_{\Lambda}(\lambda):=s(m,\ell,N)=-\frac{1}{8}+\frac{(\ell+1)^2}{4(N+2)}-\frac{m^2}{4}.\label{equation:s-def}
\end{equation}

\smallskip
The character of $L(\Lambda)$ can be expressed in terms of theta functions and string functions as (See (\cite{SW,Kac,KP} for details and for how to extend it to other Kac--Moody algebras)
\begin{equation*}
\ch_{L(\Lambda)}(z,q)=\sum_{\substack{0\leq m <2N\\ \text{$m+\ell$ even}}}
C_{m,\ell}^{N}(q)\Theta_{m,N}(z,q), 
\end{equation*}
where $\Theta_{n,m}(z,q)$ is a Jacobi theta function of degree $m$ and characteristic $n$
\begin{equation*}\label{Thetadef}
\Theta_{n,m}(z,q)=\sum_{j\in\Z+n/2m} q^{mj^2}z^{-mj}.
\end{equation*}
The  $C_{m,\ell}^N(q)$ are level-$N$ A$_1^{(1)}$ string functions, see (\ref{strfun}).

\smallskip
The character can be computed as (see \cite{SW, W} and references therein)

\begin{align*}
\ch_{L(\Lambda)}(z,q)=\frac{1}{\eta^3(\tau)}
\sum_{m\in 2\Z+\ell} \sum_{j\in\Z}\sum_{i\in\mathbb{N}}
(-1)^i q^{\frac{1}{2}i(i+m)+ (N+2)(j+(\ell+1)/(2(N+2)))^2} \\
\times \Bigl\{q^{\frac{1}{2}i(2(N+2)j+\ell+1)}
-q^{-\frac{1}{2}i(2(N+2)j+\ell+1)}\Bigr\}z^{-\frac{1}{2}m},
\end{align*}
which gives the expression for string functions as
\begin{multline}\label{sffinal}
C_{m,\ell}^N(q)=\frac{q^{\frac{(\ell+1)^2}{4(N+2)}-\frac{m^2}{4N}}}
{\eta^3(\tau)}\sum_{j\in\Z}
\sum_{i\in\mathbb{N}} (-1)^i q^{\frac{1}{2}i(i+m)+j((N+2)j+\ell+1)} \\
\times
\Bigl\{q^{\frac{1}{2}i(2(N+2)j+\ell+1)}-q^{-\frac{1}{2}i(2(N+2)j+\ell+1)}\Bigr\}.
\end{multline}
This is our starting point.

\smallskip
Before we state our main results, we recall the classic symmetries for string functions.  They are
{\allowdisplaybreaks \begin{align*}
C_{m,\ell}^{N}(q)&=C_{-m,\ell}^{N}(q),\\
C_{m,\ell}^{N}(q)&=C_{2N-m,\ell}^{N}(q),\\
C_{m,\ell}^{N}(q)&=C_{N-m,N-\ell}^{N}(q).
\end{align*}}%
We also remind the reader of our definition for $s_{\Lambda}(\lambda)$ (\ref{equation:s-def}).  Our main results read
\begin{theorem} \label{theorem:main-result} We have
{\allowdisplaybreaks \begin{align}
C_{m,\ell}^{2K}(q)\pm C_{2K-m,\ell}^{2K}(q)
&=\frac{q^{s(m,\ell,2K)}}{J_{1}^3}\Big ( f_{K+1,K+1,1}(\pm q^{1+\frac{1}{2}(K+\ell)},q^{1+\frac{1}{2}(m+\ell)},q)
\label{equation:OP-split-1}\\
& \ \ \ \ \  \pm q^{\frac{1}{2}(K-\ell)}f_{K+1,K+1,1}(\pm q^{1+\frac{1}{2}(3K-\ell)},q^{1+K+\frac{1}{2}(m-\ell)},q)\Big).\notag
\end{align}}%
\end{theorem}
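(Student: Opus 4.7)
The plan is to transform the explicit formula (\ref{sffinal}) into Hecke-type double sums $f_{K+1,K+1,1}$ by a suitable change of summation indices, and then to combine $C_{m,\ell}^{2K}$ and $C_{2K-m,\ell}^{2K}$ so as to complete the summation range. First I would write $\eta(\tau)^3 = q^{1/8}J_1^3$ in order to extract the prefactor $q^{s(m,\ell,2K)}/J_1^3$ from (\ref{sffinal}), letting $F(m,\ell)$ denote the remaining double sum. Since $s(2K-m,\ell,2K) - s(m,\ell,2K) = (m-K)/2$, the theorem becomes equivalent to the identity
\[
F(m,\ell) \pm q^{(m-K)/2} F(2K-m,\ell) = f_{K+1,K+1,1}(\pm x, y, q) \pm q^{(K-\ell)/2} f_{K+1,K+1,1}(\pm x', y', q),
\]
where $x = q^{1+(K+\ell)/2}$, $y = q^{1+(m+\ell)/2}$, $x' = q^{1+(3K-\ell)/2}$, $y' = q^{1+K+(m-\ell)/2}$.

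The decisive step is a change of indices. Split $F(m,\ell)$ into its two halves $F^\pm$ corresponding to $\pm q^{\pm iA/2}$, with $A = 2(N+2)j+\ell+1$ and $N+2 = 2(K+1)$. In $F^+$, apply $(r,s) = (2j,i)$: the quadratic form $\tfrac12 i^2 + (N+2)ij + (N+2)j^2$ becomes $(K+1)\binom{r}{2} + (K+1)rs + \binom{s}{2}$ up to linear terms, and these linear terms combine with $i(m+\ell+2)/2$ and $(\ell+1)j$ to produce the factor $x^r y^s$. In $F^-$, apply instead $(r,s) = (-2j-1,i)$, which forces $r$ to be odd; the analogous computation yields $(x')^r(y')^s$ together with an overall constant $q^{(K-\ell)/2}$ arising from the offset $-1$ in $r = -2j-1$. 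After tracking signs (using $(-1)^i = (-1)^s$, and noting that $r$ odd converts $(-1)^s$ into $-(-1)^{r+s}$), $F(m,\ell)$ is expressed as a sum of two partial Hecke-type sums running over $s \ge 1$ with $r \in 2\Z$ and $r \in 2\Z+1$, respectively.

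The same substitution applied to $F(2K-m,\ell)$ affects only the $y$-type parameters: $y \mapsto yq^{K-m}$ and $y' \mapsto y'q^{K-m}$. In $q^{(m-K)/2}F(2K-m,\ell)$, apply the substitutions $s \to -s$ and $r \to -r$. Using $\binom{-s}{2} = \binom{s}{2}+s$ and $\binom{-r}{2} = \binom{r}{2}+r$, the quadratic form $(K+1)\binom{r}{2}+(K+1)rs+\binom{s}{2}$ is restored; the remaining linear corrections are absorbed via identities such as $q^{(K+1)r}x^{-r} = q^{r(K-\ell)/2}$, producing the "$s \le -1$" half of the same Hecke-type sums. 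Combined with the "$s \ge 1$" half from the previous step, this yields the full sums $f_{K+1,K+1,1}(\pm x, y, q)$ and $q^{(K-\ell)/2} f_{K+1,K+1,1}(\pm x', y', q)$, the signs $\pm$ in the arguments selecting even or odd parity of $r$ through the decomposition $f(\pm x, y, q) = f^{(\textrm{even in }r)}(x,y,q) \pm f^{(\textrm{odd in }r)}(x,y,q)$. The main obstacle will be verifying that the various $q$-power corrections coming from $\binom{-r}{2}$, $\binom{-s}{2}$, the $yq^{K-m}$ shift, and the external $q^{(m-K)/2}$ weight combine exactly to recover the Hecke-sum structure, and that the $s=0$ boundary term either cancels or emerges correctly from the parity decomposition of $f$; each verification is an elementary $q$-series manipulation, but the total bookkeeping is nontrivial.
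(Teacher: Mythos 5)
Your opening move checks out: splitting (\ref{sffinal}) at the bracket and substituting $(r,s)=(2j,i)$ in the plus-half and $(r,s)=(-2j-1,i)$ in the minus-half does produce summands $(-1)^{r+s}x^ry^sq^{Q(r,s)}$ and $q^{(K-\ell)/2}(-1)^{r+s}(x')^r(y')^sq^{Q(r,s)}$ with $Q(r,s)=(K+1)\binom{r}{2}+(K+1)rs+\binom{s}{2}$; I verified the linear coefficients, the constant $q^{(K-\ell)/2}$, and the sign bookkeeping, and this is in effect half of the computation the paper performs inside Proposition \ref{proposition:f1N1-fNN1}. The recombination step, however, is where your proposal genuinely breaks. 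After your point reflection $(r,s)\to(-r,-s)$, the even-$r$ part of $q^{(m-K)/2}F(2K-m,\ell)$ carries exponent $Q(r,s)+\tfrac{K-\ell}{2}r-\tfrac{2K+\ell-m}{2}s+\tfrac{m-K}{2}$ on the full strip $\{s\le -1,\ r\in 2\Z\}$, whereas the $s\le-1$ portion of $f_{K+1,K+1,1}(\pm x,y,q)$ lives on the quadrant $\{r\le-2 \textup{ even},\ s\le-1\}$ with exponent $Q(r,s)+(1+\tfrac{K+\ell}{2})r+(1+\tfrac{m+\ell}{2})s$; the two differ by $-(\ell+1)r-(K+\ell+1)s+\tfrac{m-K}{2}$, which is not constant, so no overall $q$-power (and no relabeling of the kind you invoke) reconciles them. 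The identity $q^{(K+1)r}x^{-r}=q^{r(K-\ell)/2}$ you cite is true but it computes the reflected \emph{target} quadrant, not your reflected source strip. There is also a structural mismatch you cannot bookkeep away: in the theorem each sign case contains a single $f_{K+1,K+1,1}(\pm x,y,q)$, which contains \emph{both} parities of $r$ (the $\pm$ in the first argument records an absorbed $(-1)^{r}$, not a parity projection), while your construction feeds only even-$r$ terms to the $(x,y)$-sum and only odd-$r$ terms to the $(x',y')$-sum; and your pieces contain no $s=0$ terms at all, whereas each target $f$ has an $s=0$ row equal to a partial theta $\sum_{r\ge0}(-1)^rx^rq^{(K+1)\binom{r}{2}}$ that must be generated by cross-quadrant cancellations.

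What actually makes the theorem work is an \emph{affine}, not linear, reflection. In the paper's proof of Proposition \ref{proposition:f1N1-fNN1}, the two string-function sums are merged \emph{first} into one sum over the checkerboard lattice $u\equiv v\pmod 2$, rotated by $u=n+j$, $v=n-j$, the negative cone is mapped by $n\mapsto -n-1$ (note the shift of $1$, which your $(r,s)\to(-r,-s)$ omits and which is the source of your mismatched linear terms), and only then does the shear $n=\rho+\sigma$, $j=\sigma$ land exactly on the Hecke region $\sg(\rho)=\sg(\sigma)$; the two $f_{K+1,K+1,1}$'s arise from the factor $\bigl(1-q^{(K+2-d-e)(n+1/2)}\bigr)$, not from an even/odd split in $r$. (The theorem itself is then deduced from the established representation (\ref{equation:gen-fabc-SW}) together with (\ref{equation:f-shift}) with $(R,S)=(1,0)$.) Finally, your deferred ``bookkeeping'' is also where convergence lives: your strips diverge termwise along $r\approx-s$, and the two halves of (\ref{sffinal}) satisfy $E^{-}(i,j+i)=E^{+}(i,j)$, so unrestricted rearrangement of these pieces can formally telescope the whole sum to $0$; any repair of your approach must therefore keep the summation organized as the paper does rather than manipulate the divergent halves independently.
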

\begin{corollary} \label{corollary:f1K1-fKK1-A} We have
{\allowdisplaybreaks \begin{align}
C_{m,K}^{2K}(q)
&=\frac{q^{s(m,K,2K)}}{J_{1}^3}f_{K+1,K+1,1}(q^{K+1},q^{1+\frac{1}{2}(m+K)},q).
\label{equation:OP-2}
\end{align}}%
\end{corollary}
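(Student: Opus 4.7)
The plan is to specialize Theorem~\ref{theorem:main-result} at $\ell=K$ with the $+$ sign and let the third classical symmetry collapse the left-hand side and the arithmetic collapse the right-hand side.

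First I would invoke the symmetry $C^{N}_{m,\ell}(q)=C^{N}_{N-m,N-\ell}(q)$ with $N=2K$ and $\ell=K$. It gives $C^{2K}_{m,K}(q)=C^{2K}_{2K-m,K}(q)$, so the left-hand side of \eqref{equation:OP-split-1} with $\ell=K$ and the $+$ sign equals $2\,C^{2K}_{m,K}(q)$. (For the $-$ sign the left-hand side is identically $0$, and I would only use this as a consistency check.)

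Next I would check that, with $\ell=K$, the two $f_{K+1,K+1,1}$ terms on the right-hand side of \eqref{equation:OP-split-1} coincide. A direct substitution gives
\begin{gather*}
1+\tfrac{1}{2}(K+\ell)=K+1=1+\tfrac{1}{2}(3K-\ell),\\
1+\tfrac{1}{2}(m+\ell)=1+\tfrac{1}{2}(m+K)=1+K+\tfrac{1}{2}(m-\ell),
\end{gather*}
and the prefactor $q^{\frac{1}{2}(K-\ell)}$ reduces to $1$. Hence both summands on the right equal $f_{K+1,K+1,1}(q^{K+1},q^{1+\frac{1}{2}(m+K)},q)$.

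Combining these two steps turns \eqref{equation:OP-split-1} into
\begin{equation*}
2\,C^{2K}_{m,K}(q)=\frac{q^{s(m,K,2K)}}{J_{1}^{3}}\cdot 2\,f_{K+1,K+1,1}\!\left(q^{K+1},\,q^{1+\frac{1}{2}(m+K)},\,q\right),
\end{equation*}
and dividing by $2$ yields \eqref{equation:OP-2}. There is essentially no obstacle here; the only thing to verify is the elementary arithmetic in the exponents, which is precisely what is engineered to collapse at the symmetry point $\ell=K$ of the level-$2K$ family.
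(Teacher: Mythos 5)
Your proposal is correct and takes essentially the same route as the paper: the paper's proof of \eqref{equation:OP-2} is exactly the specialization $\ell=K$ of \eqref{equation:OP-split-1} with the plus sign, where the classical symmetry $C_{m,\ell}^{N}(q)=C_{N-m,N-\ell}^{N}(q)$ (at $N=2K$, $\ell=K$) collapses the left-hand side to $2\,C_{m,K}^{2K}(q)$ and the exponent arithmetic $1+\tfrac{1}{2}(3K-\ell)=K+1$, $1+K+\tfrac{1}{2}(m-\ell)=1+\tfrac{1}{2}(m+K)$, $q^{\frac{1}{2}(K-\ell)}=1$ identifies the two double-sums. You have merely written out the elementary verifications that the paper leaves implicit in the phrase ``we let $\ell=K$,'' so there is nothing to correct.
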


\begin{corollary} \label{corollary:f1K1-fKK1-B} For $K\equiv \ell \pmod 2$, we have
{\allowdisplaybreaks \begin{align}
C_{K,\ell}^{2K}(q)&=\frac{q^{s(K,\ell,2K)}}{J_{1}^3}
f_{K+1,K+1,1}(q^{1+\frac{1}{2}(K+\ell)},q^{1-\frac{1}{2}(K-\ell)},q).
\label{equation:OP-3}
\end{align}}%
\end{corollary}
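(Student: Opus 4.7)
The plan is to specialize Theorem~\ref{theorem:main-result} to $m=K$ with the $+$ sign. Because $2K-m=K$, the left-hand side collapses to $2C_{K,\ell}^{2K}(q)$, giving
$$2C_{K,\ell}^{2K}(q)=\frac{q^{s(K,\ell,2K)}}{J_1^3}\Bigl(f_{K+1,K+1,1}(X,X,q)+q^{(K-\ell)/2}f_{K+1,K+1,1}(Xq^{K-\ell},Xq^{K-\ell},q)\Bigr),$$
where I write $X:=q^{1+(K+\ell)/2}$. The parity hypothesis $K\equiv \ell\pmod 2$ ensures that both $Xq^{K-\ell}=q^{1+(3K-\ell)/2}$ and $q^{(K-\ell)/2}$ are integer powers of $q$, so all arguments are well-defined.

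Since $Xq^{-K}=q^{1-(K-\ell)/2}$, dividing by $2$ reduces the corollary to the Hecke-type identity
$$f_{K+1,K+1,1}(X,X,q)+q^{(K-\ell)/2}f_{K+1,K+1,1}(Xq^{K-\ell},Xq^{K-\ell},q)=2f_{K+1,K+1,1}(X,Xq^{-K},q).$$
To establish this, I would parametrize each double sum by $n:=r+s$, using the factorization
$$(K+1)\tbinom{r}{2}+(K+1)rs+\tbinom{s}{2}=(K+1)\tbinom{n}{2}-K\tbinom{s}{2},$$
which follows from substituting $r=n-s$. With this, each of the three $f$'s becomes a weighted double sum over $n\in\mathbb{Z}$ with outer factor $(-1)^nX^nq^{(K+1)\binom{n}{2}}$, a pre-factor $q^{(K-\ell)n}$ (only in the middle term), and an inner $s$-sum of $q^{-K\binom{s}{2}}$ or (for the right-hand side) $q^{-K\binom{s+1}{2}}$. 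Matching the inner sums through the shift $s\mapsto s+1$, the identity reduces to a collapsing of the resulting boundary differences via Jacobi's zero identity $j(1;q^m)=0$ applied at $m=1$ and $m=K+1$.

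The main obstacle lies in the bookkeeping between the positive ($r,s\ge 0$) and negative ($r,s<0$) regions of the Hecke-type sum: under the shift $s\mapsto s+1$, the summation ranges deform non-trivially and produce boundary contributions at the edges of each region, which must be tracked carefully and seen to cancel in pairs by the theta-function vanishing. The parity condition $K\equiv\ell\pmod 2$ is essential here, both for the shifted exponents to remain integers and for the half-integer offset $(K-\ell)/2$ in the coefficient to pair up correctly with the resulting Jacobi-type sums.
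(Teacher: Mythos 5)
Your reduction is exactly the paper's first step: specializing Theorem \ref{theorem:main-result} to $m=K$ with the plus sign collapses the left-hand side to $2C_{K,\ell}^{2K}(q)$, leaving precisely your identity
\begin{equation*}
f_{K+1,K+1,1}(X,X,q)+q^{\frac{K-\ell}{2}}f_{K+1,K+1,1}(Xq^{K-\ell},Xq^{K-\ell},q)=2f_{K+1,K+1,1}(X,Xq^{-K},q),
\quad X:=q^{1+\frac{1}{2}(K+\ell)},
\end{equation*}
to prove; but from there the routes genuinely differ. The paper dispatches this identity in three quick moves with the functional equations: (\ref{equation:f-shift}) with $(R,S)=(-1,1)$ converts the first summand into $f_{K+1,K+1,1}(X,Xq^{-K},q)$ plus a theta term $j(X;q^{K+1})$, while (\ref{equation:f-flip}) followed by (\ref{equation:f-shift}) with $(R,S)=(0,-1)$ converts the second summand into $f_{K+1,K+1,1}(X,Xq^{-K},q)$ minus the same theta term, the two copies of $j(q^{1+\frac{1}{2}(K+\ell)};q^{K+1})$ generated along the way cancelling against each other (for general $\ell$ they are not individually zero). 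Your direct series plan is viable --- the factorization $(K+1)\binom{r}{2}+(K+1)rs+\binom{s}{2}=(K+1)\binom{n}{2}-K\binom{s}{2}$ with $n=r+s$ is correct --- but it requires one move you did not spell out: because of the extra factor $q^{(K-\ell)n}$, the middle term cannot be matched termwise in $n$ with the other two series; you must reflect its outer index via $n\mapsto -n-1$, using $X\cdot Xq^{K-\ell}=q^{2(K+1)}$, which yields $q^{(K-\ell)/2}(-1)^n(Xq^{K-\ell})^nq^{(K+1)\binom{n}{2}}=-(-1)^{k}X^{k}q^{(K+1)\binom{k}{2}}$ with $k=-n-1$, folding the $n\ge 0$ (resp.\ $n\le -2$) part of the middle sum onto negative (resp.\ positive) indices. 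Once that is done, writing $D(n):=\sum_{s=0}^{n}q^{-K\binom{s}{2}}$ for the inner sums, the difference of your two sides collapses for every $n\in\mathbb{Z}$ to $-(-1)^{n}X^{n}q^{(K+1)\binom{n}{2}-K\binom{n+1}{2}}=-(-1)^n(Xq^{-K})^nq^{\binom{n}{2}}$, so that summing gives $-j(Xq^{-K};q)=-j(q^{1-\frac{1}{2}(K-\ell)};q)=0$ by the parity hypothesis: the vanishing that saves you is a single modulus-$q$ theta with integral-power argument, not the modulus-$q^{K+1}$ vanishing you anticipated (modulus $q^{K+1}$ enters only in the paper's functional-equation route, where it cancels pairwise rather than vanishes). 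Your approach is in the spirit of the paper's own proof of Proposition \ref{proposition:f1N1-fKK1} --- more precisely, of Proposition \ref{proposition:f1N1-fNN1} --- re-deriving the identity from the series definition; it buys an argument independent of (\ref{equation:f-flip}) and makes the role of $K\equiv\ell\pmod{2}$ fully explicit, while the paper's route buys brevity.
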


Theorem \ref{theorem:main-result} and its two corollaries provide a framework for interpreting and proving the string function expressions for affine Lie algebras of type $A_{1}^{(1)}$ found in Kac and Peterson \cite{KP}.  In \cite[pp. 219-220]{KP}, Kac and Peterson give several examples of string functions for affine Lie algebras of type $A_{1}^{(1)}$ that have beautiful evaluations in terms of theta functions.  See also \cite{KW1, KW2}.  If we fix a positive integer $m$, their string functions are of the form \cite[p. 260]{KP}:
\begin{equation}
c_{\lambda}^{\Lambda}(\tau)
=\frac{1}{\eta(\tau)^3}\cdot\sum_{\substack{(x,y)\in\mathbb{R}^2\\ -|x|<y\le |x|\\
(x,y) \ \textup{or} \ (1/2-x,1/2+y)\in ((N+1)/2(m+2),n/2m)+\mathbb{Z}^2}}
\sg(x)q^{(m+2)x^2-my^2},\label{equation:KP-fullGlory}
\end{equation}
where $N$ and $n$ are integers with $n\equiv N \pmod 2$.  The string functions $c_{\lambda}^{\Lambda}(\tau)$ are closely related to the real quadratic fields $\mathbb{Q}(\sqrt{m(m+2)})$ and to Hecke indefinite modular forms.  
We have replaced Kac and Peterson's notation $(n,N,m)$ with $(m,\ell,N)$ of \cite{SW}.  Here our focus will be on double-sum evaluations.  A partial list of string functions from \cite[pp. 219--220]{KP} reads

Level 1:
\begin{subequations}
\begin{gather}
c_{10}^{10}=\eta(\tau)^{-1},\label{equation:KP-1-orig}
\end{gather}
\end{subequations}

Level 2:
\begin{subequations}
\begin{gather}
c_{11}^{11}=\eta(\tau)^{-2}\eta(2\tau),\label{equation:KP-2-orig}
\end{gather}
\end{subequations}

Level 4:
\begin{subequations}
\begin{gather}
c_{22}^{40}=\eta(\tau)^{-2}\eta(6\tau)^{-1}\eta(12\tau)^2,\label{equation:KP-4A-orig}\\
c_{40}^{40}-c_{04}^{40}=\eta(2\tau)^{-1},\label{equation:KP-4B-orig}
\end{gather}
\end{subequations}

Level 6:
\begin{subequations}
\begin{gather}
c_{51}^{33}=\eta(\tau)^{-3}\eta(2\tau)\eta(3\tau)\eta(6\tau)^{-1}\eta(12\tau),\label{equation:KP-6A-orig}\\
c_{51}^{51}+c_{15}^{51}=\eta(\tau)^{-3}\eta(2\tau)\eta(6\tau)^2\eta(12\tau)^{-1},\label{equation:KP-6B-orig}\\
c_{51}^{51}-c_{15}^{51}=\eta(\tau)^{-1},\label{equation:KP-6C-orig}
\end{gather}
\end{subequations}

Level 8:
\begin{subequations}
\begin{gather}
c_{62}^{44}=\eta(\tau)^{-3}\eta(2\tau)\eta(10\tau),\label{equation:KP-8A-orig}\\
c_{62}^{62}-c_{26}^{62}=\eta(\tau)^{-1}\eta(2\tau)^{-1}q^{1/10}
\prod_{n \not \equiv \pm 1 \pmod {5}}(1-q^{4n}),\label{equation:KP-8B-orig}
\end{gather}
\end{subequations}

Level 10:
\begin{subequations}
\begin{gather}
c_{55}^{73}=\eta(\tau)^{-3}\eta(2\tau)\eta(5\tau)^{-1}\eta(10\tau)^{2},\label{equation:KP-10A-orig}\\
c_{91}^{55}=\eta(\tau)^{-3}q^{29/40}\prod_{n \not \equiv \pm 1 \pmod {5}}(1-q^{2n})
\prod_{n \not \equiv \pm 2 \pmod {5}}(1-q^{3n}),\label{equation:KP-10B-orig}\\
c_{91}^{91}-c_{19}^{91}=\eta(\tau)^{-2}\eta(2\tau)q^{-1/15}\prod_{n \equiv \pm 1 \pmod {5}}(1-q^{4n})^{-1}.\label{equation:KP-10C-orig}
\end{gather}
\end{subequations}

In this paper we rewrite the double-sum form of the string functions (\ref{equation:KP-fullGlory}) using an expression found in \cite{SW}, where from \cite{SW} and \cite[p. 260]{KP} we have that
\begin{equation}
\eta^{3}(\tau)c_{N-m,m}^{N-\ell,\ell}
=q^{\frac{(\ell+1)^2}{4(N+2)}-\frac{m^2}{4N}}
\cdot f_{1,1+N,1}(q^{1+\tfrac{1}{2}(m+\ell)},q^{1-\tfrac{1}{2}(m-\ell)},q).\label{equation:gen-fabc-SW}
\end{equation}

\smallskip
Kac and Peterson appeal to modularity to prove the string function identities  \cite[p. 220]{KP}.  Specifically, they use the transformation law for string functions under the full modular group, together with the calculation of the first few terms in the Fourier expansions of the string functions.  They also take advantage of the fact that a modular form vanishing at cusps to sufficiently high order is zero.  There are many methods to compute string functions, see \cite[pp. 222--223]{KP} for a brief outline.  In our present work, we will take a different approach.

\smallskip
Once string function identities (\ref{equation:KP-1-orig})--(\ref{equation:KP-10C-orig}) have been written in terms of suitable Hecke-type double-sums, see (\ref{equation:KP-1-Hecke})--(\ref{equation:KP-10C-Hecke}), their modularity can be determined using results of \cite{HM}, see in particular the formulas found in Section \ref{section:double-sums}.   A simple example of more general results found in \cite{HM} reads
\begin{align}
f_{1,2,1}(x,y,q)&:=\Big ( \sum_{r,s\ge 0 }-\sum_{r,s<0}\Big )(-1)^{r+s}x^ry^sq^{\binom{r}{2}+2rs+\binom{s}{2}}
\label{equation:f121}\\
& \ =j(y;q)m\big (\frac{q^2x}{y^2},q^3,-1\big )+j(x;q)m\big (\frac{q^2y}{x^2},q^3,-1\big )\notag \\
&\ \ \ \ \ \ \  - \frac{yJ_3^3j(-x/y;q)j(q^2xy;q^3)}
{\overline{J}_{0,3}j(-qy^2/x;q^3)j(-qx^2/y;q^3)}.\notag
\end{align}
Such formulas were found by using a heuristic that relates Appell--Lerch functions to divergent partial theta functions, see \cite[Section 3]{HM} and \cite[Section 4]{Mort2021}.  

\smallskip
For an easy demonstration of (\ref{equation:f121}), we consider the case level $N=1$.  It is straightforward to see from (\ref{equation:KP-fullGlory}) that (\ref{equation:KP-1-orig}) is equivalent to showing:
\begin{gather}
f_{1,2,1}(q,q,q)=J_1^2.\label{equation:KP-1-Hecke}
\end{gather}
One sees from (\ref{equation:f121}) that
\begin{align*}
f_{1,2,1}(q,q,q)&=j(q;q)m(q,q^3,-1)+j(q;q)m(q,q^3,-1)
-\frac{1}{\overline{J}_{0,3}}\cdot \frac{qJ_{3}^3\overline{J}_{0,1}J_{4,3}}
{\overline{J}_{2,3}^2}\\
&=0+0+J_1^2=J_1^2,
\end{align*}
where we used the fact that $j(x;q)=0$ if and only if $x$ is an integral power of $q$ and elementary product rearrangements.  On the other hand, if we make a slight variation in the inputs, we have
\begin{equation*}
f_{1,2,1}(q,-q,q)=j(-q;q)m(q,q^3,-1)+0+0=2\overline{J}_{1,4}m(q,q^3,-1)= \overline{J}_{1,4}\cdot \phi(q),\label{equation:phi-lerch}
\end{equation*}
where $\phi(q)$ is a sixth-order mock theta function \cite{AH}. 

\smallskip
In general, one needs to be aware of the number of theta quotients that must be dealt with in simplifying the results.  Double-sums of the form $f_{1,n+1,1}$ require computing $n^2$ theta quotients, see in particular Theorem \ref{theorem:masterFnp}.   For example, one sees from (\ref{equation:KP-fullGlory}) that (\ref{equation:KP-10B-orig}) is equivalent to showing
\begin{equation}
f_{1,11,1}(q^4,q^3,q)=J_{4,10}J_{3,15},\label{equation:f1n1-100}
\end{equation}
which requires working with one-hundred theta quotients!  Looking for ways to simplify the process leads us to our main theorem (Theorem \ref{theorem:main-result}).

\smallskip
Double-sums of the form $f_{K+1,K+1,1}$  that appear in Theorem \ref{theorem:main-result} only have $K+1$ theta quotients that need to be summed.  Using our main theorem and its corollaries, the string function identities (\ref{equation:KP-2-orig})--(\ref{equation:KP-10C-orig}) are equivalent to the respective double-sum evaluations:  

\smallskip
Level 2:
\begin{subequations}
\begin{gather}
f_{2,2,1}(q^2,q,q)=J_1J_2,\label{equation:KP-2-Hecke}
\end{gather}
\end{subequations}

Level 4:
\begin{subequations}
\begin{gather}
q^{-1}f_{3,3,1}(q^2,1,q)=J_1\overline{J}_{6,24},\label{equation:KP-4A-Hecke}\\
f_{3,3,1}(-q^2,q,q)-qf_{3,3,1}(-q^4,q^3,q)=J_1J_{1,2},\label{equation:KP-4B-Hecke}
\end{gather}
\end{subequations}

Level 6:
\begin{subequations}
\begin{gather}
f_{4,4,1}(q^4,q^3,q)=J_2J_{3,12},\label{equation:KP-6A-Hecke}\\
f_{4,4,1}(q^3,q^2,q)+qf_{4,4,1}(q^5,q^4,q)=J_2J_{6,12},\label{equation:KP-6B-Hecke}\\
f_{4,4,1}(-q^3,q^2,q)-qf_{4,4,1}(-q^5,q^4,q)=J_1^2,\label{equation:KP-6C-Hecke}
\end{gather}
\end{subequations}

Level 8:
\begin{subequations}
\begin{gather}
f_{5,5,1}(q^5,q^4,q)=J_2J_{10},\label{equation:KP-8A-Hecke}\\
f_{5,5,1}(-q^4,q^3,q)-qf_{5,5,1}(-q^6,q^5,q)=J_{1,2}J_{8,20},\label{equation:KP-8B-Hecke}
\end{gather}
\end{subequations}

Level 10:
\begin{subequations}
\begin{gather}
q^{-1}f_{6,6,1}(q^5,1,q)=J_2\overline{J}_{5,20},\label{equation:KP-10A-Hecke}\\
f_{6,6,1}(q^6,q^4,q)=J_{4,10}J_{3,15},\label{equation:KP-10B-Hecke}\\
f_{6,6,1}(-q^4,q^2,q)-q^2f_{6,6,1}(-q^8,q^6,q)=\frac{J_2J_{20}}{J_{1}^2J_{4,20}}.\label{equation:KP-10C-Hecke}
\end{gather}
\end{subequations}

In particular our example in (\ref{equation:f1n1-100}) becomes (\ref{equation:KP-10B-Hecke}), which only requires working with six theta quotients, see Theorem \ref{theorem:main-acdivb}.

\smallskip
In \cite{HM} we demonstrated that the double-sum formulas give straightforward proofs of the classical mock theta function identities, and in particular the formulas give new proofs of the mock theta conjectures \cite{H1, H2}.  The focus here is to demonstrate the robustness of our approach in application to string functions.   We evaluate eight string functions using double-sum formulas.   Four double-sum computations are pieced together from previous results and four double-sum computations are new.   

\smallskip
In Section \ref{section:prelim}, we recall background information on theta functions, Appell--Lerch functions, and Hecke-type double-sums.   In Section \ref{section:classical-string-functions}, we discuss classical string function relations in the environment of Hecke-type double-sums.   In Section \ref{section:Level-gen-N}, we obtain (\ref{equation:gen-fabc-SW}), where the corrected proof comes from a sketch found in \cite[Example 1.3]{HM}.  In Section \ref{section:new-string-functions}, we prove our main result: Theorem \ref{theorem:main-result} and its two corollaries, which one could consider to be new string function relations.  In the remaining sections, we use Theorem \ref{theorem:main-result} to prove even-level string function identities.

\smallskip
In Section \ref{section:LevelN2}, we evaluate the level $N=2$ string function $c_{11}^{11}$ (\ref{equation:KP-2-orig}).  In Section \ref{section:LevelN4}, we evaluate the level $N=4$ string function $c_{40}^{40}-c_{04}^{40}$ (\ref{equation:KP-4B-orig}), where the proof is essentially from \cite{Mort2021}.  In Section \ref{section:LevelN6}, we evaluate the level $N=6$ string functions $c_{51}^{33}$ (\ref{equation:KP-6A-orig}) and $c_{51}^{51}+c_{15}^{51}$ (\ref{equation:KP-6B-orig}).   In Section \ref{section:LevelN6}, we also evaluate the level $N=6$ string function $c_{51}^{51}-c_{15}^{51}$ (\ref{equation:KP-6C-orig}), where the proof is essentially from \cite{Mort2021}.     In Section \ref{section:LevelN8}, we evaluate the level $N=8$ string function $c_{62}^{44}$ (\ref{equation:KP-8A-orig}), where the proof is essentially from \cite{HM}.      In Section \ref{section:LevelN10}, we evaluate the level $N=10$ string function $c_{91}^{55}$ (\ref{equation:KP-10B-orig}).

\smallskip
For other work on integral level string functions, we draw the reader's attention to Lepowsky and Primc's \cite{LP}.

\section{Preliminaries}\label{section:prelim}

\subsection{Theta functions}
We collect some frequently encountered product rearrangements:
\begin{subequations}
\begin{gather}
\overline{J}_{0,1}=2\overline{J}_{1,4}=\frac{2J_2^2}{J_1},  \overline{J}_{1,2}=\frac{J_2^5}{J_1^2J_4^2},   J_{1,2}=\frac{J_1^2}{J_2},   \overline{J}_{1,3}=\frac{J_2J_3^2}{J_1J_6},\notag \\
 J_{1,4}=\frac{J_1J_4}{J_2},   J_{1,6}=\frac{J_1J_6^2}{J_2J_3},   \overline{J}_{1,6}=\frac{J_2^2J_3J_{12}}{J_1J_4J_6}.\notag
\end{gather}
\end{subequations}
Following from the definitions are the following general identities:
{\allowdisplaybreaks \begin{subequations}
\begin{gather}
j(z;q)=\sum_{k=0}^{m-1}(-1)^k q^{\binom{k}{2}}z^k
j\big ((-1)^{m+1}q^{\binom{m}{2}+mk}z^m;q^{m^2}\big ),\label{equation:j-split}\\
j(q^n x;q)=(-1)^nq^{-\binom{n}{2}}x^{-n}j(x;q), \ \ n\in\mathbb{Z},\label{equation:j-elliptic}\\
j(x;q)=j(q/x;q)\label{equation:1.7},\\
j(x;q)={J_1}j(x;q^n)j(qx;q^n)\cdots j(q^{n-1}x;q^n)/{J_n^n} \ \ {\text{if $n\ge 1$,}}\label{equation:1.10}\\
j(x;-q)={j(x;q^2)j(-qx;q^2)}/{J_{1,4}},\label{equation:1.11}\\
j(x^n;q^n)={J_n}j(x;q)j(\zeta_nx;q)\cdots j(\zeta_n^{n-1}x;q^n)/{J_1^n},  \label{equation:1.12}
\end{gather}
\end{subequations}}%
if $n\ge 1$, $\zeta_n$ is a primitive $n$-th root of unity.

\smallskip
 A convenient form of the Weierstrass three-term relation for theta functions is,
\begin{proposition}\label{proposition:Weierstrass} For generic $a,b,c,d\in \mathbb{C}^*$
\begin{align*}
j(ac;q)&j(a/c;q)j(bd;q)j(b/d;q)\\
&=j(ad;q)j(a/d;q)j(bc;q)j(b/c;q)+b/c \cdot j(ab;q)j(a/b;q)j(cd;q)j(c/d;q).
\end{align*}
\end{proposition}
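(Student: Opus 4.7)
The plan is to establish this classical Weierstrass relation by a standard elliptic-function argument in the variable $a$, treating $b,c,d\in\C^{*}$ as generic parameters. First I would use the elliptic transformation $j(qx;q)=-x^{-1}j(x;q)$ from (\ref{equation:j-elliptic}) to verify that under $a\mapsto qa$ each of the three products $j(ac;q)j(a/c;q)$, $j(ad;q)j(a/d;q)$, and $j(ab;q)j(a/b;q)$ is multiplied by the same factor $a^{-2}$. Consequently, all three terms in the claimed identity share the same quasi-periodicity as holomorphic functions of $a$.

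Next I would introduce the quotient
\begin{equation*}
h(a):=\frac{j(ac;q)j(a/c;q)j(bd;q)j(b/d;q)-j(ad;q)j(a/d;q)j(bc;q)j(b/c;q)}{j(ab;q)j(a/b;q)j(cd;q)j(c/d;q)}.
\end{equation*}
By the common quasi-periodicity, $h$ descends to a meromorphic function on the elliptic curve $\C^{*}/q^{\Z}$. The denominator has simple zeros at $a\equiv b^{\pm 1}\pmod{q^{\Z}}$, and I would check that the numerator also vanishes at these two points. At $a=b$ this is immediate, as both terms of the numerator collapse to $j(bc;q)j(b/c;q)j(bd;q)j(b/d;q)$; at $a=1/b$ it follows by applying the inversion formula $j(y^{-1};q)=-y^{-1}j(y;q)$ (a direct consequence of (\ref{equation:j-elliptic}) together with (\ref{equation:1.7})) to the relevant factors, after which the two numerator terms once again agree. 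Hence $h$ is holomorphic on the compact Riemann surface $\C^{*}/q^{\Z}$, and therefore constant.

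To pin down the value of the constant I would specialize to $a=c$. The first numerator term vanishes since $j(1;q)=0$. Replacing $j(c/b;q)=-(c/b)\,j(b/c;q)$ in the second term and in the denominator, the common factor $j(bc;q)j(b/c;q)j(cd;q)j(c/d;q)$ cancels and one is left with $h(c)=b/c$. Since $h$ is constant, $h(a)\equiv b/c$, which upon clearing denominators is exactly the stated identity.

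The only mildly delicate step I anticipate is the sign bookkeeping required to verify that the numerator vanishes at $a=1/b$; everything else is a textbook application of the principle that a holomorphic function on an elliptic curve is constant.
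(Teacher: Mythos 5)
Your proof is correct, but there is nothing in the paper to compare it against: the paper states Proposition \ref{proposition:Weierstrass} without proof, presenting it as the classical Weierstrass three-term relation for theta functions, and only uses it later (in Section \ref{section:LevelN10}, specialized to $a=q^{15}$, $b=q^{10}$, $c=q^{6}$, $d=q^{5}$). So your Liouville-type argument supplies a proof the paper omits, and every step checks out. Under $a\mapsto qa$, each paired product such as $j(ac;q)j(a/c;q)$ picks up the factor $\big(-(ac)^{-1}\big)\big(-(a/c)^{-1}\big)=a^{-2}$ by (\ref{equation:j-elliptic}), so $h$ is indeed elliptic on $\C^{*}/q^{\Z}$; for generic $b$ (with $b^{2}\notin q^{\Z}$, so the two points are distinct) the denominator has only the simple zeros $a\equiv b^{\pm 1}\pmod{q^{\Z}}$, and your vanishing checks are right: at $a=b$ both numerator terms coincide, while at $a=b^{-1}$ the inversion $j(x^{-1};q)=-x^{-1}j(x;q)$ (which does follow from (\ref{equation:j-elliptic}) and (\ref{equation:1.7})) turns both terms into $b^{-2}\,j(bc;q)j(b/c;q)j(bd;q)j(b/d;q)$, so $h$ is holomorphic, hence constant. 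One small bookkeeping correction to your final step: at $a=c$ the inversion is needed only in the denominator, since the surviving numerator term contains $j(b/c;q)$ rather than $j(c/b;q)$; the numerator there equals \emph{minus} the second term, and this sign cancels against the sign from $j(c/b;q)=-(c/b)\,j(b/c;q)$ in the denominator, yielding $h\equiv b/c$ exactly as you assert. With that clarified, your argument is a complete and standard proof of the identity the paper takes as known background.
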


We collect several useful results about theta functions in terms of a proposition \cite{H1, H2}: 
\begin{proposition}   For generic $x,y,z\in \mathbb{C}^*$ 
{\allowdisplaybreaks \begin{subequations}
\begin{gather}
j(-x;q)j(y;q)-j(x;q)j(-y;q)=2xj(x^{-1}y;q^2)j(qxy;q^2),\label{equation:H1Thm1.2A}\\
j(-x;q)j(y;q)+j(x;q)j(-y;q)=2j(xy;q^2)j(qx^{-1}y;q^2).\label{equation:H1Thm1.2B}
\end{gather}
\end{subequations}}%
\end{proposition}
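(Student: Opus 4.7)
The plan is to prove both identities directly from the series form of the Jacobi triple product, $j(x;q)=\sum_{n\in\Z}(-1)^nq^{\binom{n}{2}}x^n$, by expanding the left-hand side of each identity as a double series in two integer indices and sorting the cross terms by the parity of their index sum. The inputs needed beyond the series definition are only the inversion $j(x;q)=j(q/x;q)$ from (\ref{equation:1.7}) and the elliptic transformation (\ref{equation:j-elliptic}).

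First I would multiply out to write
\begin{equation*}
j(-x;q)j(y;q)\pm j(x;q)j(-y;q)=\sum_{m,n\in\Z}\bigl[(-1)^n\pm(-1)^m\bigr]q^{\binom{m}{2}+\binom{n}{2}}x^my^n.
\end{equation*}
The bracket $(-1)^n+(-1)^m$ equals $2(-1)^m$ when $m+n$ is even and vanishes otherwise, while $(-1)^n-(-1)^m$ equals $2(-1)^n$ when $m+n$ is odd and vanishes otherwise. So in each case the surviving lattice of $(m,n)$ is a coset of the index-two sublattice $\{m+n\ \text{even}\}\subset\Z^2$, which I would parametrize by a pair $(k,\ell)\in\Z^2$.

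For (\ref{equation:H1Thm1.2B}) I would use the bijection $m=k+\ell$, $n=k-\ell$. A short calculation gives $\binom{m}{2}+\binom{n}{2}=2\binom{k}{2}+\ell^2$ and $x^my^n=(xy)^k(x/y)^\ell$, so the double sum factors as
\begin{equation*}
2\Bigl(\sum_{k}(-1)^kq^{2\binom{k}{2}}(xy)^k\Bigr)\Bigl(\sum_{\ell}(-1)^{\ell}q^{\ell^2}(x/y)^\ell\Bigr).
\end{equation*}
The first factor is $j(xy;q^2)$; rewriting $q^{\ell^2}=(q^2)^{\binom{\ell}{2}}q^\ell$ identifies the second as $j(qx/y;q^2)$, and the inversion (\ref{equation:1.7}) applied in base $q^2$ converts this to $j(qy/x;q^2)$, yielding (\ref{equation:H1Thm1.2B}).

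For (\ref{equation:H1Thm1.2A}) I would use $m=k+\ell$, $n=k-\ell+1$, which bijects $\Z^2$ onto $\{m+n\ \text{odd}\}$. An analogous calculation yields $\binom{m}{2}+\binom{n}{2}=2\binom{k}{2}+k+2\binom{\ell}{2}$, $x^my^n=y(xy)^k(x/y)^\ell$, and $(-1)^n=-(-1)^{k+\ell}$, so after factoring the double sum collapses to $-2y\,j(qxy;q^2)j(x/y;q^2)$. Applying (\ref{equation:j-elliptic}) in base $q^2$ together with (\ref{equation:1.7}) gives $j(x/y;q^2)=-(x/y)j(y/x;q^2)$, converting the result to $2x\,j(x^{-1}y;q^2)j(qxy;q^2)$, which is (\ref{equation:H1Thm1.2A}). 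The main obstacle here is bookkeeping: choosing a parametrization of $\{(m,n):m+n\ \text{odd}\}$ that keeps the exponent of $q$ in the standard binomial form, and then reconciling the two naturally produced theta functions with the asymmetric form of the right-hand side via the elliptic and inversion relations — routine but unforgiving of sign errors.
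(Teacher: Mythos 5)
Your proof is correct in all details: the expansion of the left-hand side as $\sum_{m,n}\bigl[(-1)^n\pm(-1)^m\bigr]q^{\binom{m}{2}+\binom{n}{2}}x^my^n$, the parity analysis of the bracket, the substitutions $(m,n)=(k+\ell,\,k-\ell)$ and $(m,n)=(k+\ell,\,k-\ell+1)$, the exponent computations $\binom{m}{2}+\binom{n}{2}=2\binom{k}{2}+\ell^2$ (even case) and $\binom{m}{2}+\binom{n}{2}=k^2+2\binom{\ell}{2}$ (odd case), and the final conversions $j(qx/y;q^2)=j(qy/x;q^2)$ via (\ref{equation:1.7}) and $j(x/y;q^2)=-(x/y)\,j(y/x;q^2)$ via (\ref{equation:j-elliptic}) with (\ref{equation:1.7}) all check out, including the sign $(-1)^n=-(-1)^{k+\ell}$ that produces the factor $-2y$ collapsing to $+2x$. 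Note, however, that there is no in-paper proof to compare against: the paper states this proposition with only a citation to Hickerson \cite{H1, H2}, so you have supplied what the paper omits. Your parity dissection of the product of two triple-product series into two theta functions on a quadratic sublattice is the standard argument one finds in that cited literature (and is essentially a two-variable instance of the splitting mechanism behind (\ref{equation:j-split})), so your route is the expected one: elementary, self-contained, and requiring nothing beyond the series definition and the two transformation laws you invoke.
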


We finish the subsection by giving a few examples of elementary theta function identities and product rearrangements in action.
\begin{lemma} \label{lemma:f661-identities}  We have
\begin{equation}
\frac{J_{30}^3}{\overline{J}_{0,5}} 
\frac{J_{3,6}}{J_{3,30}}
- 2\cdot  \frac{J_{6,60}\overline{J}_{5,30}\overline{J}_{10,30}}{\overline{J}_{0,5}\overline{J}_{0,30}\overline{J}_{3,30}}\cdot \frac{J_{6}J_{60}}{J_{30}^4}
\cdot  J_{9,30}J_{21,30}J_{15,30}J_{5,30}
=0,\label{equation:f661-idA}
\end{equation}
and
\begin{equation}
4\cdot  \frac{J_{6,60}\overline{J}_{5,30}\overline{J}_{10,30}}{\overline{J}_{0,5}\overline{J}_{0,30}\overline{J}_{3,30}}
\cdot \frac{J_{6}J_{60}}{J_{30}^4}
\cdot J_{16,30}J_{20,30}J_{26,30} J_{10,30}
=J_{4,10}J_{3,15}.\label{equation:f661-idB}
\end{equation}
\end{lemma}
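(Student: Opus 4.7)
Both identities are product relations among Jacobi theta constants $J_{a,m}$ and $\overline{J}_{a,m}$, and my strategy for each is the same: push every factor down to a common fine base, namely $q^{30}$, where all relevant exponents (multiples of $1,5,6,10,15,30$) live naturally. Once that is done, the identities collapse to combinatorial products in the $J_{a,30}$'s and in $J_{30},J_{60}$, which are verifiable using only the duality (1.7), the elementary rearrangements listed in Section~2.1, and the well-known consequence $j(x;q)j(-x;q)=j(x^2;q^2)J_1^2/J_2$ of the Jacobi triple product.

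\textbf{Step 1: Dualize.} I first apply $J_{a,m}=J_{m-a,m}$ (identity (1.7)) to recognize $J_{21,30}=J_{9,30}$ in (6.1) and $J_{16,30}=J_{14,30}$, $J_{20,30}=J_{10,30}$, $J_{26,30}=J_{4,30}$ in (6.2). This symmetrizes the four-fold products on the left so they become standard building blocks.

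\textbf{Step 2: Expand the small-modulus factors.} For (6.1) I expand $J_{3,6}$ by identity (1.10) applied at base $q^6$ with $n=5$, producing $J_{3,6}=J_6\,J_{3,30}J_{9,30}J_{15,30}J_{21,30}J_{27,30}/J_{30}^5$. After using (1.7) and cancelling the explicit $J_{3,30}$ against the denominator on the left of (6.1), the identity reduces to a single relation among $J_{a,30}$'s, $J_{30},J_{60}$, and the barred quantities $\overline{J}_{0,30},\overline{J}_{3,30},\overline{J}_{5,30},\overline{J}_{10,30}$. For (6.2) I do the analogous expansions: $J_{4,10}$ via (1.10) at base $q^{10}$ with $n=3$, and $J_{3,15}$ via (1.10) at base $q^{15}$ with $n=2$.

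\textbf{Step 3: Eliminate the barred factors.} Using $j(x;q)j(-x;q)=j(x^2;q^2)J_1^2/J_2$ I obtain $J_{a,m}\overline{J}_{a,m}=J_{2a,2m}J_m^2/J_{2m}$; in particular $J_{3,30}\overline{J}_{3,30}=J_{6,60}J_{30}^2/J_{60}$, while $\overline{J}_{0,30}=2J_{60}^2/J_{30}$ from the listed rearrangements. Substituting these into (6.1) causes the factor $J_{6,60}/\overline{J}_{3,30}$ and the constant $2$ on the right to absorb cleanly, reducing (6.1) to the check $J_{30}^3 = J_{5,30}\overline{J}_{5,30}\overline{J}_{10,30}$, which follows from the same pairing identity combined with (1.11) applied at base $q^{15}$. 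The same bookkeeping applied to (6.2) reduces it to an identity purely in the $J_{a,30}$'s that matches the expansion of $J_{4,10}J_{3,15}$ from Step~2 term by term.

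\textbf{Main obstacle.} The content of the lemma is not deep, but the bookkeeping is intricate: one must correctly track the powers of $J_{30}$ and $J_{60}$ produced by each splitting, the factors of $2$ arising from $\overline{J}_{0,m}=2J_{2m}^2/J_m$, and the choices of duality $J_{a,m}=J_{m-a,m}$ needed to make left and right match. Beyond that the verification is mechanical and uses only the product-rearrangement formulas (1.7)--(1.12); if at the end a single nontrivial relation between $J_{a,30}$ and barred quantities remains, it can always be discharged by one application of Proposition~1.2 (the Weierstrass three-term relation).
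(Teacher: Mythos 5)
Your proposal is correct and takes essentially the same route as the paper's proof: both reduce everything to modulus $30$ via (\ref{equation:1.10}) (expanding $J_{3,6}$, $J_{4,10}=J_{6,10}$, and $J_{3,15}$), eliminate the barred factors using the $n=2$ case of (\ref{equation:1.12}) (your pairing identity $J_{a,m}\overline{J}_{a,m}=J_{2a,2m}J_m^2/J_{2m}$, giving $J_{6,60}/\overline{J}_{3,30}=J_{3,30}J_{60}/J_{30}^2$) together with the listed rearrangements for $\overline{J}_{0,1}$, $\overline{J}_{1,3}$, $\overline{J}_{1,6}$, $J_{1,6}$, and finish with $J_{1,5}J_{2,5}=J_1J_5$ and (\ref{equation:1.7}), with your residual check $J_{30}^3=J_{5,30}\overline{J}_{5,30}\overline{J}_{10,30}$ for (\ref{equation:f661-idA}) being exactly the content of the paper's second-summand computation. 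The only slip is cosmetic: that residual check follows from the scaled $J_{1,6}$ and $\overline{J}_{1,3}$ rearrangements rather than from (\ref{equation:1.11}), and neither proof in fact needs the Weierstrass three-term relation.
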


\begin{proof}[Proof of Lemma \ref{lemma:f661-identities}]

For the first summand in (\ref{equation:f661-idA}), we apply (\ref{equation:1.10}) to $J_{3,6}$ to obtain
\begin{align*}
\frac{J_{30}^3J_{3,6}}{\overline{J}_{0,5}J_{3,30}} 
&=\frac{J_{30}^3}{\overline{J}_{0,5}J_{3,30}} 
\cdot J_{3,30}J_{9,30}J_{15,30}J_{21,30}J_{27,30} \cdot \frac{J_6}{J_{30}^5}\\
&=\frac{J_{3,30}J_{9,30}J_{15,30}J_{21,30}J_6}{\overline{J}_{0,5}J_{30}^2}. 
\end{align*}
For the second summand, we begin by applying (\ref{equation:1.12}) to $J_{6,60}$ to have
{\allowdisplaybreaks \begin{align*}
2\cdot & \frac{J_{6,60}\overline{J}_{5,30}\overline{J}_{10,30}}{\overline{J}_{0,5}\overline{J}_{0,30}\overline{J}_{3,30}}\cdot \frac{J_{6}J_{60}}{J_{30}^4}
\cdot  J_{9,30}J_{21,30}J_{15,30}J_{5,30}\\
&=2\cdot  \frac{\overline{J}_{5,30}\overline{J}_{10,30}}{\overline{J}_{0,5}\overline{J}_{0,30}\overline{J}_{3,30}}\cdot J_{3,30}\overline{J}_{3,30}\cdot \frac{J_{60}}{J_{30}^2}\cdot \frac{J_{6}J_{60}}{J_{30}^4}
\cdot  J_{9,30}J_{21,30}J_{15,30}J_{5,30}\\
&= \frac{\overline{J}_{5,30}\overline{J}_{10,30}}{\overline{J}_{0,5}}\cdot J_{3,30}\cdot \frac{J_{6}}{J_{30}^5}
\cdot  J_{9,30}J_{21,30}J_{15,30}J_{5,30},
\end{align*}}%
where we have used the product rearrangement $\overline{J}_{0,1}=2J_2^2/J_1$ and simplified.  Applying (\ref{equation:1.12}) to the product $J_{5,30}\overline{J}_{5,30}$ yields
{\allowdisplaybreaks \begin{align*}
2\cdot & \frac{J_{6,60}\overline{J}_{5,30}\overline{J}_{10,30}}{\overline{J}_{0,5}\overline{J}_{0,30}\overline{J}_{3,30}}\cdot \frac{J_{6}J_{60}}{J_{30}^4}
\cdot  J_{9,30}J_{21,30}J_{15,30}J_{5,30}\\
&= \frac{\overline{J}_{10,30}}{\overline{J}_{0,5}}\cdot J_{3,30}\cdot \frac{J_{6}}{J_{30}^5}
\cdot  J_{9,30}J_{21,30}J_{15,30}J_{10,60}\cdot\frac{J_{30}^2}{J_{60}}\\
&= \frac{J_{3,30}}{\overline{J}_{0,5}} \cdot \frac{J_{6}}{J_{30}^5}
\cdot  J_{9,30}J_{21,30}J_{15,30}
\cdot \frac{J_{10}J_{60}^2}{J_{20}J_{30}}
\cdot\frac{J_{30}^2}{J_{60}}\cdot \frac{J_{20}J_{30}^2}{J_{10}J_{60}}\\
&= \frac{J_{3,30}J_{9,30}J_{15,30}J_{21,30}J_{6}}{\overline{J}_{0,5}J_{30}^2}, 
\end{align*}}%
where for the second equality we used the product rearrangements for $\overline{J}_{1,3}$ and $J_{1,6}$, and for the third equality we simplified.

For the second identity (\ref{equation:f661-idB}), we use the product rearrangements for $\overline{J}_{0,1}$, $\overline{J}_{1,6}$, and $\overline{J}_{1,3}$ and simplify to have
{\allowdisplaybreaks \begin{align*}
4\cdot & \frac{J_{6,60}\overline{J}_{5,30}\overline{J}_{10,30}}{\overline{J}_{0,5}\overline{J}_{0,30}\overline{J}_{3,30}}
\cdot \frac{J_{6}J_{60}}{J_{30}^4}
\cdot J_{16,30}J_{20,30}J_{26,30} J_{10,30}\\
&=  \frac{J_{6,60}\overline{J}_{5,30}\overline{J}_{10,30}}{\overline{J}_{3,30}}
\cdot \frac{J_{6}}{J_{30}^3}
\cdot J_{16,30}J_{26,30} \cdot \frac{J_{5}}{J_{60}}\\
&=  \frac{J_{6,60}}{\overline{J}_{3,30}}
\cdot \frac{J_{6}}{J_{30}^2}
\cdot J_{16,30}J_{26,30} \cdot \frac{J_{10}J_{15}}{J_{60}}\\
&=J_{3,30}
\cdot \frac{J_{6}}{J_{30}^4}
\cdot J_{16,30}J_{26,30} \cdot J_{10}J_{15},
\end{align*}}%
where for the last equality we used (\ref{equation:1.12}).  Identity $(\ref{equation:1.10})$, gives us
\begin{equation*}
J_{6,10}=J_{6,30}J_{16,30}J_{26,30}\cdot \frac{J_{10}}{J_{30}^3} \textup{ and } J_{3,15}=J_{3,30}J_{18,30}\cdot \frac{J_{15}}{J_{30}^2}.
\end{equation*}
Hence we can rewrite the right-hand side as
\begin{equation*}
J_{3,30} \cdot \frac{J_{6}}{J_{30}^4}
\cdot J_{16,30}J_{26,30} \cdot J_{10}J_{15}
=J_{6,10}J_{3,15}\cdot \frac{J_6J_{30}}{J_{6,30}J_{18,30}}
=J_{4,10}J_{3,15},
\end{equation*}
where we have used the product rearrangement $J_{1,5}J_{2,5}=J_{1}J_{5}$ and (\ref{equation:1.7}).
\end{proof}

\subsection{Appell--Lerch functions}\label{section:prop-mxqz}

The Appell--Lerch function satisfies several functional equations and identities \cite{HM, Zw}:

\begin{proposition}  For generic $x,z\in \mathbb{C}^*$
{\allowdisplaybreaks \begin{subequations}
\begin{gather}
m(x,q,z)=m(x,q,qz),\label{equation:mxqz-fnq-z}\\
m(x,q,z)=x^{-1}m(x^{-1},q,z^{-1}),\label{equation:mxqz-flip}\\
m(qx,q,z)=1-xm(x,q,z),\label{equation:mxqz-fnq-x}\\
m(x,q,z)=m(x,q,x^{-1}z^{-1}), \label{equation:mxqz-flip-xz}\\
m(x,q,z_1)-m(x,q,z_0)=\frac{z_0J_1^3j(z_1/z_0;q)j(xz_0z_1;q)}{j(z_0;q)j(z_1;q)j(xz_0;q)j(xz_1;q)}.
\label{equation:changing-z-theorem}
\end{gather}
\end{subequations}}
\end{proposition}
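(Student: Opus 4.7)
The plan is to prove the five identities in the order (\ref{equation:mxqz-fnq-z}), (\ref{equation:mxqz-flip}), (\ref{equation:mxqz-fnq-x}), (\ref{equation:changing-z-theorem}), (\ref{equation:mxqz-flip-xz}), so that each step uses only the defining series (\ref{equation:mxqz-def}) together with ingredients already established.

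For (\ref{equation:mxqz-fnq-z}), I would substitute $z\mapsto qz$ in (\ref{equation:mxqz-def}), use $j(qz;q)=-z^{-1}j(z;q)$ (a special case of (\ref{equation:j-elliptic})) on the prefactor, and shift $r\mapsto r+1$; the identity $\binom{r}{2}+r=\binom{r+1}{2}$ then recovers the original series. For (\ref{equation:mxqz-flip}), I would reindex $s=1-r$ in (\ref{equation:mxqz-def}) (using $\binom{1-s}{2}=\binom{s}{2}$), multiply each summand by $-q^{s}x^{-1}z^{-1}$ in both numerator and denominator to convert $1-q^{-s}xz$ into $1-q^{s-1}x^{-1}z^{-1}$, and invoke $j(z^{-1};q)=-z^{-1}j(z;q)$ (obtained by combining (\ref{equation:1.7}) and (\ref{equation:j-elliptic})). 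For (\ref{equation:mxqz-fnq-x}), the key algebraic rewriting is
\begin{equation*}
\frac{x}{1-q^{r-1}xz} = \frac{q^{1-r}}{z}\Bigl(\frac{1}{1-q^{r-1}xz}-1\Bigr);
\end{equation*}
inserting this into $xm(x,q,z)$ and then shifting $r\mapsto r+1$ (so that $\binom{r}{2}+1-r=\binom{r-1}{2}$), the first piece becomes $-m(qx,q,z)$ while the second telescopes to $-1$ via Jacobi's triple product $\sum_r(-1)^rq^{\binom{r}{2}}z^r = j(z;q)$.

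The main obstacle is (\ref{equation:changing-z-theorem}). My plan is to clear denominators and prove the equivalent theta identity
\begin{equation*}
j(z_0;q)j(z_1;q)j(xz_0;q)j(xz_1;q)\,\bigl[m(x,q,z_1)-m(x,q,z_0)\bigr] = z_0 J_1^3\, j(z_1/z_0;q)\, j(xz_0z_1;q),
\end{equation*}
by viewing both sides as meromorphic functions of $z_1$ with the other variables fixed. Using (\ref{equation:mxqz-fnq-z}) together with (\ref{equation:j-elliptic}), a direct check shows that the two sides satisfy the same transformation (multiplication by $x^{-1}z_1^{-2}$) under $z_1\mapsto qz_1$, and they vanish at $z_1=z_0$; the potential poles in a fundamental domain are removable on the LHS and absent on the RHS, so a Liouville-type argument forces equality. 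A more computational alternative is to apply the Weierstrass three-term relation (Proposition \ref{proposition:Weierstrass}) with $(a,b,c,d)$ chosen so that $j(ac;q)j(a/c;q)j(bd;q)j(b/d;q)$ matches the four-theta product on the LHS; the delicate bookkeeping in pinning down the remaining pieces against the series for $m(x,q,z_1)-m(x,q,z_0)$ is where I expect the real work to lie.

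Finally, (\ref{equation:mxqz-flip-xz}) is an immediate corollary of (\ref{equation:changing-z-theorem}): specializing $z_0\mapsto z$ and $z_1\mapsto x^{-1}z^{-1}$ gives $xz_0z_1 = 1$, hence $j(xz_0z_1;q) = j(1;q) = 0$ (since $(1;q)_\infty = 0$), and the right-hand side vanishes, yielding $m(x,q,x^{-1}z^{-1}) = m(x,q,z)$.
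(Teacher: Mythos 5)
The paper itself offers no proof of this proposition: it is stated with a citation to \cite{HM, Zw} (it is essentially \cite[Proposition 3.1 and Theorem 3.3]{HM}), so your attempt should be judged as a reconstruction of the standard arguments. Your treatments of (\ref{equation:mxqz-fnq-z}), (\ref{equation:mxqz-flip}), (\ref{equation:mxqz-fnq-x}) are the right elementary series manipulations, modulo two bookkeeping slips you should repair: in (\ref{equation:mxqz-flip}), multiplying by $-q^{s}x^{-1}z^{-1}$ turns $1-q^{-s}xz$ into $1-q^{s}x^{-1}z^{-1}$ (not $1-q^{s-1}x^{-1}z^{-1}$), so one further unit shift of the index is needed before the series is recognized; and in (\ref{equation:mxqz-fnq-x}) the non-pole piece contributes $+1$ to $xm(x,q,z)$, giving $xm(x,q,z)=1-m(qx,q,z)$, not $-1$. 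Your derivation of (\ref{equation:mxqz-flip-xz}) from (\ref{equation:changing-z-theorem}) via $xz_0z_1=1$ and $j(1;q)=0$ is correct and is exactly the standard reduction.

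The genuine gap is in your Liouville argument for (\ref{equation:changing-z-theorem}). The space of functions holomorphic on $\mathbb{C}^*$ satisfying $f(qz_1)=x^{-1}z_1^{-2}f(z_1)$ is two-dimensional (writing $f(z_1)=\sum a_nz_1^n$, the functional equation gives $a_{n+2}=xq^na_n$, leaving $a_0,a_1$ free), so such a function has two zeros per fundamental annulus. Hence ``same multiplier, both vanish at $z_1=z_0$, no poles'' only shows the cleared left side equals $c\cdot z_0J_1^3j(z_1/z_0;q)j(xz_0z_1;q)$ for some constant $c$ depending on $x,z_0,q$; nothing in your sketch pins $c=1$ --- indeed every scalar multiple of the right side satisfies all the conditions you list, and note your argument never produces the factor $J_1^3$, which must come from an actual computation. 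The standard fix (this is how \cite{HM} proceeds) is to work with the uncleared difference $D(z_1):=m(x,q,z_1)-m(x,q,z_0)-\textup{RHS}$: it is invariant under $z_1\mapsto qz_1$, and the poles at $z_1\in q^{\mathbb{Z}}x^{-1}$ cancel because both sides have the same residue there; e.g.\ at $z_1=x^{-1}$ only the $r=1$ term of (\ref{equation:mxqz-def}) contributes, giving residue $x^{-2}/j(x^{-1};q)$, which matches the residue of the theta quotient via the expansion $j(w;q)=(1-w)J_1^3+O\big((1-w)^2\big)$ near $w=1$ --- this is precisely where $J_1^3$ enters. Then $D$ is elliptic with at most one simple pole per fundamental annulus (at $z_1\in q^{\mathbb{Z}}$), hence constant by the residue theorem for elliptic functions, and $D(z_0)=0$ forces $D\equiv 0$. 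Without this residue computation (or some equivalent normalization at a special point) your argument does not close; your Weierstrass-relation alternative suffers from the same missing normalization, as you half-acknowledge.
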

Some simple evaluations of the Appell--Lerch function follow.
\begin{corollary} \label{corollary:mxqz-eval} We have
\begin{align}
m(q,q^2,-1)&=1/2,\label{equation:mxqz-eval-a}\\
m(-1,q^2,q)&=0. \label{equation:mxqz-eval-b}
\end{align}
\end{corollary}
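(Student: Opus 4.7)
The plan is to derive each evaluation by applying the functional equations of Proposition/identities \eqref{equation:mxqz-fnq-q-form}\ldots actually, more precisely, to apply the four listed functional equations \eqref{equation:mxqz-fnq-z}--\eqref{equation:mxqz-flip-xz} to the left-hand side in two different ways and then equate the results. In each case the two resulting expressions will be a linear equation pinning down the value, so there is essentially no computation beyond bookkeeping of signs and $q$-powers. I do not expect any genuine obstacle; the main thing to be careful about is taking $q \to q^2$ in the stated identities correctly (since the evaluations use base $q^2$ rather than $q$).

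For \eqref{equation:mxqz-eval-a}, first I would use the flip identity \eqref{equation:mxqz-flip}, read with the base parameter $q^2$ and inputs $x=q$, $z=-1$, to obtain
\[
m(q,q^2,-1)=q^{-1}m(q^{-1},q^2,-1).
\]
Next I would use the shift identity \eqref{equation:mxqz-fnq-x} with the base parameter $q^2$, which reads $m(q^2x,q^2,z)=1-xm(x,q^2,z)$; taking $x=q^{-1}$ and $z=-1$ gives
\[
m(q,q^2,-1)=1-q^{-1}m(q^{-1},q^2,-1).
\]
Substituting the first relation into the right-hand side of the second yields $m(q,q^2,-1)=1-m(q,q^2,-1)$, and hence the value $1/2$.

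For \eqref{equation:mxqz-eval-b}, I would again apply the flip \eqref{equation:mxqz-flip}, now with base $q^2$ and $x=-1$, $z=q$, to obtain
\[
m(-1,q^2,q)=-\,m(-1,q^2,q^{-1}).
\]
Then I would apply the $z$-periodicity \eqref{equation:mxqz-fnq-z} (again in the form where the base is $q^2$, so $z\mapsto q^2z$ leaves $m$ invariant); a single application with $z=q^{-1}$ gives $m(-1,q^2,q^{-1})=m(-1,q^2,q)$. Combining the two relations produces $m(-1,q^2,q)=-m(-1,q^2,q)$, which forces $m(-1,q^2,q)=0$. Both proofs are therefore self-contained one-line manipulations once the functional equations have been carefully specialized to base $q^2$.
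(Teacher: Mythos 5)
Your proof is correct and matches the paper's intended argument: the corollary is stated there without a separate proof, precisely as an immediate consequence of the functional equations (\ref{equation:mxqz-fnq-z}), (\ref{equation:mxqz-flip}), and (\ref{equation:mxqz-fnq-x}) read with base $q^2$, which is exactly the pair of one-line manipulations you carry out (and your specializations avoid the forbidden cases, since none of $z$, $xz$ is an integral power of $q^2$). One cosmetic fix: delete the stray citation of a nonexistent equation label in your opening sentence.
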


\subsection{Hecke-type double-sums}\label{section:double-sums}
We recall a few basic properties of Hecke-type double-sums.  We have a proposition and a corollary

\begin{proposition} \cite[Proposition $6.3$]{HM}  For $x,y\in\mathbb{C}^{*}$ and $R,S\in\mathbb{Z}$
{\allowdisplaybreaks \begin{align}
f_{a,b,c}(x,y,q)&=(-x)^{R}(-y)^Sq^{a\binom{R}{2}+bRS+c\binom{S}{2}}f_{a,b,c}(q^{aR+bS}x,q^{bR+cS}y,q) \label{equation:f-shift}\\
&\ \ \ \ +\sum_{m=0}^{R-1}(-x)^mq^{a\binom{m}{2}}j(q^{mb}y;q^c)+\sum_{m=0}^{S-1}(-y)^mq^{c\binom{m}{2}}j(q^{mb}x;q^a).\notag
\end{align}}%
\end{proposition}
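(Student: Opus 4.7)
The plan is to reduce the two-parameter shift to repeated applications of a single basic recursion. I will first establish the case $R = 1$, $S = 0$, namely
\begin{equation*}
f_{a,b,c}(x,y,q) = -x\,f_{a,b,c}(q^a x, q^b y, q) + j(y;q^c),
\end{equation*}
directly from the alternate form of $f_{a,b,c}$. Substituting $r = r' + 1$ in the summation extracts a factor of $-x$ and absorbs the shift into the bases, turning the exponent into $a\binom{r'}{2} + br's + c\binom{s}{2}$ with new variables $q^ax$ and $q^by$, and changing the summation condition from $\sg(r)=\sg(s)$ to $\sg(r'+1)=\sg(s)$. The two conditions disagree only at $r' = -1$: in the shifted sum, $r'=-1$ contributes for $s \ge 0$ with sign $+1$, while in $f_{a,b,c}(q^ax, q^b y, q)$ the same $r'=-1$ contributes for $s < 0$ with sign $-1$. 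The difference therefore assembles into $\sum_{s \in \Z}$ of the $r'=-1$ summand, which after evaluating the $r'=-1$ factor (using $\binom{-1}{2}=1$) simplifies to $j(y;q^c)$.

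Iterating this recursion $R$ times (for $R \ge 1$) yields the $S=0$ specialization
\begin{equation*}
f_{a,b,c}(x,y,q) = (-x)^R q^{a\binom{R}{2}}\,f_{a,b,c}(q^{aR}x, q^{bR}y, q) + \sum_{m=0}^{R-1} (-x)^m q^{a\binom{m}{2}} j(q^{mb}y;q^c).
\end{equation*}
The manifest symmetry $f_{a,b,c}(x,y,q) = f_{c,b,a}(y,x,q)$ (swap the dummy indices $r \leftrightarrow s$) then gives the analogous identity in which the roles of $(R, a, x)$ and $(S, c, y)$ are interchanged.

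To obtain the two-parameter formula, I compose the two shifts: apply the $R$-shift first, then apply the $S$-shift to $f_{a,b,c}(q^{aR}x, q^{bR}y, q)$. The resulting new boundary sum has the form
\begin{equation*}
\sum_{m=0}^{S-1} (-q^{bR}y)^m q^{c\binom{m}{2}}\, j(q^{aR+mb}x;q^a).
\end{equation*}
Simplifying the theta factor via the elliptic relation $j(q^{aR} z; q^a) = (-1)^R q^{-a\binom{R}{2}} z^{-R} j(z; q^a)$ from (\ref{equation:j-elliptic}), the outer prefactor $(-x)^R q^{a\binom{R}{2}}$ carried in from the $R$-shift cancels both $(-1)^R q^{-a\binom{R}{2}} x^{-R}$ and the spurious $q^{mbR}$ from $(-q^{bR}y)^m$, leaving exactly $\sum_{m=0}^{S-1}(-y)^m q^{c\binom{m}{2}} j(q^{mb}x;q^a)$. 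The main term combines cleanly into $(-x)^R(-y)^S q^{a\binom{R}{2} + bRS + c\binom{S}{2}}\, f_{a,b,c}(q^{aR+bS}x, q^{bR+cS}y, q)$, as claimed.

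For negative $R$ or $S$, one inverts the basic recursion, solving for $f_{a,b,c}(q^{a}x, q^b y, q)$ in terms of $f_{a,b,c}(x,y,q)$, which yields the same formula with $R=-1$ under the convention $\sum_{m=0}^{R-1} := -\sum_{m=R}^{-1}$ for $R<0$, and iterates as before. The main technical obstacle is the base case: one must carefully identify which boundary terms appear when the index $r$ is shifted by one, and verify that the asymmetric ``extra'' term at $(r'=-1, s \ge 0)$ and the ``missing'' term at $(r'=-1, s < 0)$ conspire, through the $\sg$ signs, to assemble a full theta function $j(y;q^c)$ rather than a partial sum. Once that is in hand, the remainder is bookkeeping with binomial identities and elliptic shifts of $j$.
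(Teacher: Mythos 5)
Your proposal is correct, but note that the paper itself offers no proof of this statement: it is imported wholesale as \cite[Proposition 6.3]{HM}, so there is no internal argument to compare against. What you have done is reconstruct a self-contained proof, and in doing so you invert the paper's logical order: the paper deduces the one-step recursions (\ref{equation:fabc-fnq-1})--(\ref{equation:fabc-fnq-2}) of Corollary \ref{corollary:fabc-funceqnspecial} as specializations $(R,S)=(0,1)$ and $(1,0)$ of the proposition, whereas you prove the one-step recursion directly from the $\sg$-form of the definition and build the two-parameter formula from it. Your treatment of the crux is right: after $r=r'+1$ the conditions $\sg(r'+1)=\sg(s)$ and $\sg(r')=\sg(s)$ disagree only on the row $r'=-1$, and the extra $s\ge 0$ terms (weight $+1$) minus the missing $s<0$ terms (weight $-1$) do assemble a full bilateral sum; with $\binom{-1}{2}=1$ that row equals $-x^{-1}j(y;q^c)$, which the outer factor $-x$ converts to $+j(y;q^c)$ --- and the rearrangement into rows is legitimate since $a,b,c>0$ makes both quadrants absolutely convergent. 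The symmetry $f_{a,b,c}(x,y,q)=f_{c,b,a}(y,x,q)$ is valid because $\sg(r)=\sg(s)$ on the support, and your composition step checks out: in the second boundary sum the prefactor $(-x)^Rq^{a\binom{R}{2}}$ against $j(q^{aR+mb}x;q^a)=(-1)^Rq^{-a\binom{R}{2}}q^{-mbR}x^{-R}j(q^{mb}x;q^a)$ from (\ref{equation:j-elliptic}) cancels exactly the stray $q^{bRm}$ from $(-q^{bR}y)^m$, leaving the stated sum, while the main term collects $q^{bRS}$ as claimed. Your handling of negative indices via inverting the recursion is consistent with the paper's convention $\sum_{n=a}^{b}=-\sum_{n=b+1}^{a-1}$ (the $R=-1$ case produces precisely $x^{-1}q^{a}j(q^{-b}y;q^c)$, matching the convention); the only place you are terse is the downward induction and the mixed-sign compositions, but since the inverted recursion has the identical shape and (\ref{equation:j-elliptic}) holds for all integers $n$, that verification is routine bookkeeping. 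The net effect of your route is a proof that is more elementary than a citation and makes Corollary \ref{corollary:fabc-funceqnspecial} logically prior rather than derived.
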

When $b<a$ we adopt the summation convention that
\begin{equation}
\sum_{n=a}^{b}c_{n}:=-\sum_{n=b+1}^{a-1}c_{n},
\end{equation}
which has the useful consequence
\begin{equation}
\sum_{n=0}^{-1}c_{n}=-\sum_{n=0}^{-1}c_{n}=0.
\end{equation}

Usually when (\ref{equation:f-shift}) is used in this paper, the two summands of theta functions in the second row are always equal to zero.  In such cases, it is due to the fact that $j(x;q)=0$ if and only if $x$ is an integral power of $q$.  The only exception occurs in the last subsection of Section \ref{section:LevelN6}. 

\begin{corollary} \cite[Corollary $6.4$]{HM} \label{corollary:fabc-funceqnspecial} We have two simple specializations:
\begin{align}
f_{a,b,c}(x,y,q) =&-yf_{a,b,c}(q^bx,q^cy,q)+j(x;q^a),\label{equation:fabc-fnq-1}\\
f_{a,b,c}(x,y,q) =&-xf_{a,b,c}(q^ax,q^by,q)+j(y;q^c).\label{equation:fabc-fnq-2}
\end{align}
\end{corollary}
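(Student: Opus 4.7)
The plan is to obtain both identities as immediate specializations of the preceding proposition (equation \eqref{equation:f-shift}), which expresses $f_{a,b,c}(x,y,q)$ in terms of a shifted double-sum plus two finite sums of theta functions indexed by $R$ and $S$. The point is that the ``finite sums'' collapse to single terms (or to zero) for suitably chosen $R,S$, so each identity drops out of a one-line computation.

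For \eqref{equation:fabc-fnq-1} I would set $R=0$ and $S=1$ in \eqref{equation:f-shift}. The prefactor $(-x)^R(-y)^S q^{a\binom{R}{2}+bRS+c\binom{S}{2}}$ then reduces to $-y$, and the shifted function becomes $f_{a,b,c}(q^b x, q^c y, q)$. The first theta sum $\sum_{m=0}^{R-1}$ becomes $\sum_{m=0}^{-1}$, which is $0$ by the summation convention stated just before the corollary; the second theta sum $\sum_{m=0}^{S-1}$ becomes a single term at $m=0$, namely $j(x;q^a)$. Assembling these pieces yields \eqref{equation:fabc-fnq-1}.

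For \eqref{equation:fabc-fnq-2} I would instead set $R=1$ and $S=0$. Now the prefactor reduces to $-x$, the shifted double-sum is $f_{a,b,c}(q^a x, q^b y, q)$, the first theta sum contributes the single term $j(y;q^c)$ at $m=0$, and the second theta sum vanishes by the same summation convention. This gives \eqref{equation:fabc-fnq-2}.

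There is no real obstacle here: the content of the corollary is just the two ``minimal'' shifts of \eqref{equation:f-shift}. The only thing to be careful about is the empty-sum convention $\sum_{m=0}^{-1}c_m=0$, which is precisely what makes these two specializations clean; any other choice of $(R,S)$ would leave both theta sums nontrivial.
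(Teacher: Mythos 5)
Your proposal is correct and is exactly the intended derivation: the paper states this corollary as a citation of \cite[Corollary 6.4]{HM}, where it follows from the shift formula (\ref{equation:f-shift}) by the two specializations $(R,S)=(0,1)$ and $(R,S)=(1,0)$, with the empty sum handled by the stated convention $\sum_{m=0}^{-1}c_m=0$, just as you compute. (Your closing aside that any other $(R,S)$ leaves \emph{both} theta sums nontrivial is slightly off — e.g.\ $(R,S)=(0,2)$ still has an empty first sum — but this remark is not part of the proof and does not affect its validity.)
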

We also have the property \cite[$(6.2)$]{HM}:
\begin{equation}
f_{a,b,c}(x,y,q)=-\frac{q^{a+b+c}}{xy}f_{a,b,c}(q^{2a+b}/x,q^{2c+b}/y,q).\label{equation:f-flip}
\end{equation}

One can determine the theta function expressions of string functions using results of \cite{HM}.  However, there are two recurring issues that must be kept in mind.  The first issue is the number of theta quotients that must be computed, and the second issue is potential singularities.

To state our results, we introduce the useful
\begin{align}
g_{1,b,1}(x,y,q,z_1,z_0)
&:=j(y;q)m\Big (q^{\binom{b+1}{2}-1}x(-y)^{-b},q^{b^2-1},z_1\Big )\label{equation:mdef-2}\\
&\ \ \ \ \ + j(x;q)m\Big (q^{\binom{b+1}{2}-1}y(-x)^{-b},q^{b^2-1},z_0\Big ).\notag
\end{align}

In \cite[Theorem 1.3]{HM}, we specialize $n=1$, to have 
\begin{theorem}  \label{theorem:masterFnp} Let $p$ be a positive integer.  For generic $x,y\in \mathbb{C}^*$
\begin{align*}
f_{1,p+1,1}(x,y,q)=g_{1,p+1,1}(x,y,q,-1,-1)+\frac{1}{\overline{J}_{0,p(2+p)}}\cdot \theta_{p}(x,y,q),
\end{align*}
where
\begin{align*}
&\theta_{p}(x,y,q):=\sum_{r=0}^{p-1}\sum_{s=0}^{p-1}q^{\binom{r}{2}+(1+p) (r) (s+1 )+\binom{s+1}{2}}  (-x)^{r}(-y)^{s+1}\notag\\
 & \cdot   \frac{J_{p^2(2+p)}^3j(-q^{p(s-r)}x/y;q^{p^2})j(q^{p(2+p)(r+s)+p(1+p)}x^py^p;q^{p^2(2+p)})}{j(q^{p(2+p)r+p(1+p)/2}(-y)^{1+p}/(-x);q^{p^2(2+p)})j(q^{p(2+p)s+p(1+p)/2}(-x)^{1+p}/(-y);q^{p^2(2+p)})}.\notag
\end{align*}
\end{theorem}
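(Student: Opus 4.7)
The plan is to derive Theorem~\ref{theorem:masterFnp} as the $n=1$ specialization of the general master identity \cite[Theorem 1.3]{HM}. That master identity decomposes any admissible Hecke-type double sum $f_{a,b,c}(x,y,q)$ whose quadratic form $a\binom{r}{2}+brs+c\binom{s}{2}$ is indefinite (equivalently $b^{2}>ac$) into an Appell--Lerch piece plus a theta-quotient piece; with $a=c=1$ and $b=p+1$, the discriminant is $(p+1)^{2}-1=p(p+2)>0$, so the hypothesis applies.

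To explain the underlying mechanism rather than merely quote it, I would first apply the splitting identity~(\ref{equation:j-split}), together with a direct change of summation variables, to decompose the indices modulo $p$: write $r=pR+r_{0}$ and $s=pS+s_{0}$ with $0\le r_{0},s_{0}\le p-1$. This reorganizes $f_{1,p+1,1}(x,y,q)$ as a $p\times p$ array of inner Hecke-type sums in $(R,S)$, whose quadratic exponents are controlled by the base $q^{p^{2}}$ and, after an additional diagonalization of the cross term, by the ``large'' base $q^{p^{2}(p+2)}$ matching the discriminant.

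For each pair $(r_{0},s_{0})$ I would then extract the contribution coming from the critical boundary rays of the resulting indefinite sum; on those rays the inner sum collapses into (a multiple of) the Appell--Lerch defining series~(\ref{equation:mxqz-def}) with third parameter specialized to $z=-1$. Assembling these contributions over the two boundary families yields precisely $g_{1,p+1,1}(x,y,q,-1,-1)$ in the notation of~(\ref{equation:mdef-2}). The remaining $(r_{0},s_{0})$ classes contribute pure one-variable theta sums that factor by Jacobi's triple product into quotients of $j$-symbols in the variables $q^{p^{2}}$ and $q^{p^{2}(p+2)}$; after extracting the common factor $1/\overline{J}_{0,p(p+2)}$ and relabeling, these reassemble into the double sum $\theta_{p}(x,y,q)$ of the statement.

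The main obstacle is the detailed bookkeeping across the $p^{2}$ residue classes: tuning the linear shifts so that the Appell--Lerch parameter comes out exactly to $z=-1$, tracking powers of $q$ and the signs appearing as $(-x)^{r}(-y)^{s+1}$, and simplifying the four theta arguments in each summand so that they match the form displayed in $\theta_{p}(x,y,q)$. This is where the $p^{2}$ theta quotients noted in the introduction actually materialize, and where identities~(\ref{equation:j-elliptic}),~(\ref{equation:1.10}), and~(\ref{equation:1.12}) must be invoked repeatedly to put each term into its final form.
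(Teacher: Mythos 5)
Your proposal coincides with the paper's treatment: the paper gives no independent proof of Theorem \ref{theorem:masterFnp}, obtaining it verbatim by specializing $n=1$ in \cite[Theorem 1.3]{HM}, which is exactly your opening step (and your discriminant check $b^{2}-ac=p(p+2)>0$ is consistent with that theorem's scope). Your subsequent sketch of the mod-$p$ index splitting and boundary-ray extraction is an optional paraphrase of the partial-theta heuristic behind \cite[Section 3]{HM} and is not needed once the citation is invoked --- though note that in the final formula all $p^{2}$ residue classes contribute to $\theta_{p}(x,y,q)$ while the two Appell--Lerch summands of $g_{1,p+1,1}$ arise separately, so the clean division of residue classes into an ``Appell--Lerch family'' and a ``theta family'' that you describe should be read as heuristic rather than literal.
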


The specialization for $p=1$ will be of importance.  It is just (\ref{equation:f121}):
\begin{corollary} We have
\begin{align}
&\Big (\sum_{r,s\ge 0}-\sum_{r,s<0}\Big )(-1)^{r+s}x^ry^sq^{\binom{r}{2}+2rs+\binom{s}{2}}\\
&\ \ \ \ \ =j(y;q)m\Big (\frac{q^2x}{y^2},q^3,-1\Big )+j(x;q)m\Big (\frac{q^2y}{x^2},q^3,-1\Big )
- \frac{yJ_{3}^3j(-x/y;q)j(q^2xy;q^3)}{\overline{J}_{0,3}j(-qy^2/x,-qx^2/y;q^3)}.\notag 
\end{align}
\end{corollary}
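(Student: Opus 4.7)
The plan is to obtain this corollary as a direct specialization of Theorem \ref{theorem:masterFnp} at $p=1$, so the entire task is careful bookkeeping of exponents and a check that the single residual term in $\theta_{p}$ simplifies to the claimed theta quotient.

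First I would unpack $g_{1,p+1,1}(x,y,q,-1,-1)$ at $p=1$ using definition (\ref{equation:mdef-2}) with $b=p+1=2$. The exponents simplify to $\binom{b+1}{2}-1=\binom{3}{2}-1=2$ and $b^{2}-1=3$, while $(-y)^{-b}=y^{-2}$ and $(-x)^{-b}=x^{-2}$. This yields exactly
\[
g_{1,2,1}(x,y,q,-1,-1)=j(y;q)m\!\left(\tfrac{q^{2}x}{y^{2}},q^{3},-1\right)+j(x;q)m\!\left(\tfrac{q^{2}y}{x^{2}},q^{3},-1\right),
\]
which matches the first two summands on the right-hand side of the claim.

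Next I would evaluate $\theta_{p}(x,y,q)/\overline{J}_{0,p(2+p)}$ at $p=1$. Both sums in $\theta_{p}$ collapse to the single term $(r,s)=(0,0)$, so the overall prefactor $q^{\binom{r}{2}+(1+p)r(s+1)+\binom{s+1}{2}}(-x)^{r}(-y)^{s+1}$ reduces to $-y$. With $p^{2}(2+p)=3$, $p(s-r)=0$, $p(2+p)(r+s)+p(1+p)=2$, and $p(1+p)/2=1$, the theta-quotient factor becomes
\[
\frac{J_{3}^{3}\,j(-x/y;q)\,j(q^{2}xy;q^{3})}{j(-qy^{2}/x;q^{3})\,j(-qx^{2}/y;q^{3})}.
\]
Multiplying by $-y/\overline{J}_{0,3}$ gives precisely the third summand of the claimed identity, completing the reduction.

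There is no genuine obstacle here; the only way things could go wrong is a miscounted minus sign or an off-by-one exponent in the prefactor of $\theta_{p}$, so I would double-check the signs of $(-y)^{s+1}$ and the base $q^{2}$ in $j(q^{2}xy;q^{3})$ against the general formula before concluding. As a sanity check, one can verify that the resulting identity matches (\ref{equation:f121}), which is quoted earlier in the introduction, and which is what the corollary reproduces verbatim.
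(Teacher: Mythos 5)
Your proposal is correct and is exactly what the paper does: the corollary is obtained by setting $p=1$ in Theorem \ref{theorem:masterFnp}, and all of your exponent computations ($\binom{b+1}{2}-1=2$, $b^{2}-1=3$, the single $(r,s)=(0,0)$ term contributing the prefactor $-y$, and the quotient $J_{3}^{3}\,j(-x/y;q)\,j(q^{2}xy;q^{3})/\bigl(j(-qy^{2}/x;q^{3})\,j(-qx^{2}/y;q^{3})\bigr)$ over $\overline{J}_{0,3}$) check out against the general formula, reproducing (\ref{equation:f121}) verbatim.
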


\noindent For another useful result, we specialize \cite[Theorem 1.4]{HM} to $a=b=n$, $c=1$.  
\begin{theorem} \label{theorem:main-acdivb} Let $n$ be a positive integer.  Then
\begin{align*}
& f_{n,n,1}(x,y,q)=h_{n,n,1}(x,y,q,-1,-1)-\frac{1}{\overline{J}_{0,n-1}\overline{J}_{0,n^2-n}}\cdot \theta_{n}(x,y,q),
\end{align*}
where 
\begin{align*}
h_{n,n,1}(x,y,q,z_1,z_0):&=j(x;q^n)m\Big( -q^{n-1}yx^{-1},q^{n-1},z_1 \Big )\\
& \ \ \ \ \ \ +j(y;q)m\Big( q^{\binom{n}{2}}x(-y)^{-n},q^{n^2-n},z_0 \Big ),
\end{align*}
and
\begin{align*}
\theta_{n}(x,y,q):=&\sum_{d=0}^{n-1}
q^{(n-1)\binom{d+1}{2}}j\big (q^{(n-1)(d+1)}y;q^{n}\big )  j\big (-q^{n(n-1)-(n-1)(d+1)}xy^{-1};q^{n(n-1)}\big ) \\
& \ \ \ \ \ \cdot \frac{J_{n(n-1)}^3j\big (q^{ \binom{n}{2}+(n-1)(d+1)}(-y)^{1-n};q^{n(n-1)}\big )}
{j\big (-q^{\binom{n}{2}}x(-y)^{-n};q^{n(n-1)})j(q^{(n-1)(d+1)}x^{-1}y;q^{n(n-1)}\big )}.
\end{align*}
\end{theorem}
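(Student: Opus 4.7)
The plan is to obtain this result as a direct specialization of \cite[Theorem 1.4]{HM}, which provides the general decomposition of $f_{a,b,c}(x,y,q)$ into an Appell--Lerch piece plus a finite theta correction whenever the discriminant $D := b^2 - ac$ is positive. Here we set $a = b = n$ and $c = 1$, so that $D = n^2 - n = n(n-1)$, which is positive for $n \ge 2$ (the degenerate case $n=1$ can be checked separately from the definition of $f_{1,1,1}$ or recovered by continuity in the parameters). The proof therefore consists of three tasks: substitute the parameters into the general formula, simplify the resulting shift arguments inside the two Appell--Lerch functions, and rearrange the theta quotients in the correction term to match the stated form.

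First I would handle the Appell--Lerch piece $h_{n,n,1}(x,y,q,z_1,z_0)$. The general theorem produces two $m$-functions, one for each of the two extreme edges of the cone $\sg(r) = \sg(s)$ defining the Hecke sum. On the $r$-edge, the relevant base is $q^{b-c} = q^{n-1}$ and the multiplier collapses to $-q^{n-1} y x^{-1}$, giving the factor $j(x;q^n)\,m(-q^{n-1}yx^{-1}, q^{n-1}, z_1)$. On the $s$-edge, the base is $q^{b^2 - bc} = q^{n(n-1)}$ and, after absorbing the quadratic exponent, the multiplier is $q^{\binom{n}{2}} x(-y)^{-n}$, giving $j(y;q)\,m(q^{\binom{n}{2}} x(-y)^{-n}, q^{n(n-1)}, z_0)$. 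This is exactly the claimed form of $h_{n,n,1}$.

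Next I would specialize the theta correction. The general correction in \cite[Theorem 1.4]{HM} is a double sum over residue classes modulo $b-c$ and modulo $D/\gcd(\cdot)$; under our specialization one of these mod classes becomes trivial (since $b - c = n - 1$ collapses into the discriminant factorization $D = n(n-1)$), leaving a single sum over $d \in \{0, 1, \ldots, n-1\}$. Substituting $a = b = n$, $c = 1$ into the general summand, the quadratic exponent reduces to $(n-1)\binom{d+1}{2}$, the two linear theta factors become $j(q^{(n-1)(d+1)} y; q^n)$ and $j(-q^{n(n-1)-(n-1)(d+1)} x y^{-1}; q^{n(n-1)})$, and the denominator gains the predicted $j(-q^{\binom{n}{2}} x(-y)^{-n}; q^{n(n-1)})\, j(q^{(n-1)(d+1)} x^{-1} y; q^{n(n-1)})$. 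The overall prefactor $1/(\overline{J}_{0,n-1}\overline{J}_{0,n^2-n})$ arises from the normalization factor in the general formula, which in the parameters $(b-c, D)$ becomes precisely $\overline{J}_{0,b-c}\overline{J}_{0,D} = \overline{J}_{0,n-1}\overline{J}_{0,n(n-1)}$.

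The main obstacle, and what needs the most care, is the bookkeeping of exponents and signs during the specialization: the general formula is symmetric under a certain $(a,c)$-interchange that breaks when one sets $c = 1$, so tracking which theta function carries base $q^{n-1}$ versus $q^{n(n-1)}$ requires attention, as does reconciling the convention $(-x)^r, (-y)^s$ with the $(-y)^{1-n}$ and $(-y)^{-n}$ factors appearing in the Appell--Lerch arguments. A secondary subtlety is the possibility of the Appell--Lerch functions having singularities at the specialization values of $z_1, z_0 = -1$; these are handled by invoking the changing-$z$ identity (\ref{equation:changing-z-theorem}) to shift $z_1, z_0$ off any bad locus and then taking a limit, as is standard in this circle of ideas. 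Once these points are addressed, simplification via (\ref{equation:1.10})--(\ref{equation:1.12}) and elliptic transformations (\ref{equation:j-elliptic}) puts the expression in the stated form.
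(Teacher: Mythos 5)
Your proposal takes exactly the paper's route: Theorem \ref{theorem:main-acdivb} is obtained there simply by specializing \cite[Theorem 1.4]{HM} to $a=b=n$, $c=1$, with no further argument given, and your substitution bookkeeping (discriminant $D=n(n-1)$, the collapse of one residue-class sum so that a single sum over $d=0,\dots,n-1$ remains, the prefactor $\overline{J}_{0,n-1}\overline{J}_{0,n^2-n}$, and the handling of possible poles at $z_0=z_1=-1$ via (\ref{equation:changing-z-theorem})) is consistent with that specialization. The only point worth noting is that the identifications $q^{b-c}=q^{D/a}$ and $q^{b^2-bc}=q^{D/c}$ you use are coincidences of the case $a=b$, $c=1$ rather than features of the general theorem, and the degenerate case $n=1$ is indeed outside the formula (as you observe), but neither affects correctness.
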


Theorem \ref{theorem:main-acdivb} has the following specializations.

\begin{corollary}  \label{corollary:f221-expansion}  We have
{\allowdisplaybreaks \begin{align}
f_{2,2,1}(x,y,q)&=h_{2,2,1}(x,y,q,-1,-1)\label{equation:f221-id}\\
&\ \ \ \ -\sum_{d=0}^{1}
\frac{q^{\binom{d+1}{2}}j\big (q^{1+d}y;q^{2}\big )  j\big (-q^{1-d}x/y;q^{2}\big ) 
J_{2}^3j\big (-q^{2+d}/y;q^{2}\big )}
{4\overline{J}_{1,4}\overline{J}_{2,8}j\big (-qx/y^{2};q^2\big )j\big (q^{1+d}y/x;q^2\big )},\notag
\end{align}}%
where
\begin{align}
h_{2,2,1}(x,y,q,-1,-1)=j(x;q^2)m(-qx^{-1}y,q,-1)+j(y;q)m(qxy^{-2},q^2,-1).\label{equation:g221-id}
\end{align}
\end{corollary}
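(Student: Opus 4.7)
The plan is to obtain this corollary as a direct specialization of Theorem \ref{theorem:main-acdivb} at $n=2$, with a single product rearrangement used to handle the prefactor. In other words, there is no new analytic content — the work is purely bookkeeping — so I would organize the proof so that the reader can check each substitution in parallel with the general formula.

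First I would address the prefactor. In Theorem \ref{theorem:main-acdivb} the scalar in front of $\theta_n$ is $1/(\overline{J}_{0,n-1}\overline{J}_{0,n^2-n})$, which at $n=2$ equals $1/(\overline{J}_{0,1}\overline{J}_{0,2})$. Applying the product rearrangement $\overline{J}_{0,1}=2\overline{J}_{1,4}$ from the preliminaries, together with its image under $q\mapsto q^2$, namely $\overline{J}_{0,2}=2\overline{J}_{2,8}$, gives $\overline{J}_{0,1}\overline{J}_{0,2}=4\overline{J}_{1,4}\overline{J}_{2,8}$. This produces the denominator $4\overline{J}_{1,4}\overline{J}_{2,8}$ appearing inside the sum in (\ref{equation:f221-id}).

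Next I would specialize $h_{n,n,1}$. The definition
\[
h_{n,n,1}(x,y,q,z_1,z_0)=j(x;q^n)m\bigl(-q^{n-1}yx^{-1},q^{n-1},z_1\bigr)+j(y;q)m\bigl(q^{\binom{n}{2}}x(-y)^{-n},q^{n^2-n},z_0\bigr)
\]
at $n=2$ and $z_1=z_0=-1$ collapses to $j(x;q^2)m(-qx^{-1}y,q,-1)+j(y;q)m(qxy^{-2},q^2,-1)$, which is exactly (\ref{equation:g221-id}). Then I would specialize $\theta_n$: the factor $(n-1)\binom{d+1}{2}$ becomes $\binom{d+1}{2}$, the factor $J_{n(n-1)}^3$ becomes $J_2^3$, and the six theta functions in the summand become $j(q^{d+1}y;q^2)$, $j(-q^{1-d}x/y;q^2)$, $j(-q^{d+2}/y;q^2)$ in the numerator and $j(-qx/y^2;q^2)$, $j(q^{d+1}y/x;q^2)$ in the denominator, all using $(-y)^{1-n}=-1/y$ and $(-y)^{-n}=1/y^2$ when $n=2$.

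The only potential obstacle is a clerical one: the signs arising from $(-y)^{-n}$ and $(-y)^{1-n}$ flip between the two $d$-values, and the $q$-exponent $n(n-1)-(n-1)(d+1)=1-d$ goes negative for $d=1$, so one must make sure not to apply (\ref{equation:j-elliptic}) inadvertently. Beyond that, combining the specialized $h_{2,2,1}$ and the specialized $\theta_2$ with the rewritten scalar $1/(4\overline{J}_{1,4}\overline{J}_{2,8})$ yields (\ref{equation:f221-id}) verbatim.
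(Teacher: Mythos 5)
Your proposal is correct and coincides with the paper's own (implicit) proof: the corollary is presented as the direct $n=2$ specialization of Theorem \ref{theorem:main-acdivb}, with the scalar rewritten via $\overline{J}_{0,1}=2\overline{J}_{1,4}$ and its dilate $\overline{J}_{0,2}=2\overline{J}_{2,8}$ to give $4\overline{J}_{1,4}\overline{J}_{2,8}$, exactly as you do. Two slips in your cautionary aside are harmless: at $d=1$ the exponent $n(n-1)-(n-1)(d+1)=1-d$ equals $0$ rather than going negative, and the signs from $(-y)^{1-n}=-1/y$ and $(-y)^{-n}=y^{-2}$ do not depend on $d$, so nothing actually flips between the two summands.
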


\begin{corollary}  \label{corollary:f331-expansion}  We have
{\allowdisplaybreaks \begin{align}
f_{3,3,1}(x,y,q)&=h_{3,3,1}(x,y,q,-1,-1)\label{equation:f331-id}\\
&\ \ \ \ -\sum_{d=0}^{2}
\frac{q^{d(d+1)}j\big (q^{2+2d}y;q^{3}\big )  j\big (-q^{4-2d}x/y;q^{6}\big ) J_{6}^3j\big (q^{5+2d}/y^{2};q^{6}\big )}
{4\overline{J}_{2,8}\overline{J}_{6,24}j\big (q^{3}x/y^{3};q^6\big )j\big (q^{2+2d}y/x;q^6\big )},\notag
\end{align}}%
where
\begin{align}
h_{3,3,1}(x,y,q,-1,-1)=j(x;q^3)m(-q^2x^{-1}y,q^2,-1)+j(y;q)m(-q^3xy^{-3},q^6,-1).\label{equation:g331-id}
\end{align}
\end{corollary}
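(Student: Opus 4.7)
The plan is to obtain Corollary \ref{corollary:f331-expansion} as a direct specialization of Theorem \ref{theorem:main-acdivb} with $n=3$, so the proof should consist almost entirely of index bookkeeping together with one product rearrangement. First I would substitute $n=3$ throughout the statement of Theorem \ref{theorem:main-acdivb}, producing the decomposition
\begin{equation*}
f_{3,3,1}(x,y,q)=h_{3,3,1}(x,y,q,-1,-1)-\frac{1}{\overline{J}_{0,2}\overline{J}_{0,6}}\cdot \theta_{3}(x,y,q),
\end{equation*}
and then read off the corresponding specializations of $h_{n,n,1}$ and $\theta_n$.

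For the $h$-term, with $n=3$ we have $\binom{n}{2}=3$ and $n^2-n=6$, so the second Appell--Lerch argument becomes $q^{3}x(-y)^{-3}=-q^{3}xy^{-3}$, giving exactly (\ref{equation:g331-id}). For the $\theta$-term, setting $n=3$ turns the exponent $(n-1)\binom{d+1}{2}$ into $2\binom{d+1}{2}=d(d+1)$, and one rewrites each $j$-factor by simplifying its arguments: $q^{(n-1)(d+1)}=q^{2+2d}$, $q^{n(n-1)-(n-1)(d+1)}=q^{4-2d}$, $q^{\binom{n}{2}+(n-1)(d+1)}(-y)^{1-n}=q^{5+2d}/y^{2}$, and $-q^{\binom{n}{2}}x(-y)^{-n}=q^{3}x/y^{3}$. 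This produces the summand displayed in (\ref{equation:f331-id}) up to the overall normalization.

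The only remaining step is to reconcile the prefactor $1/(\overline{J}_{0,2}\overline{J}_{0,6})$ with the stated denominator $4\overline{J}_{2,8}\overline{J}_{6,24}$. Applying the product rearrangement $\overline{J}_{0,1}=2\overline{J}_{1,4}$ recorded in the Preliminaries, with $q$ replaced by $q^{m}$, yields $\overline{J}_{0,m}=2\overline{J}_{m,4m}$. Specializing to $m=2$ and $m=6$ gives $\overline{J}_{0,2}\overline{J}_{0,6}=4\overline{J}_{2,8}\overline{J}_{6,24}$, matching the stated denominator and completing the identification.

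Since this is a straightforward specialization, there is no substantive analytical obstacle; the main thing to be careful about is signs and exponents in the various arguments, in particular the identities $(-y)^{-3}=-y^{-3}$ and $(-y)^{-2}=y^{-2}$ which govern whether the resulting theta/Appell--Lerch arguments carry a leading minus sign. As remarked after Corollary \ref{corollary:fabc-funceqnspecial}, one also keeps in mind that the formula is valid for generic $x,y$, avoiding the singularities where any $j$-factor in a denominator vanishes.
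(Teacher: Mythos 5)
Your proposal is correct and is exactly the paper's (implicit) argument: the corollary is stated as the $n=3$ specialization of Theorem \ref{theorem:main-acdivb}, and your bookkeeping of the exponents, the signs $(-y)^{-3}=-y^{-3}$ and $(-y)^{-2}=y^{-2}$, and the rearrangement $\overline{J}_{0,2}\overline{J}_{0,6}=4\overline{J}_{2,8}\overline{J}_{6,24}$ via $\overline{J}_{0,1}=2\overline{J}_{1,4}$ all check out. No gaps.
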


\begin{corollary}  \label{corollary:f441-expansion} We have
\begin{align}
f_{4,4,1}(x,y,q)&=h_{4,4,1}(x,y,q,-1,-1)\label{equation:f441}\\
&\ \ \ \ \ -\sum_{d=0}^3\frac{q^{3\binom{d+1}{2}}j(q^{3+3d}y;q^4)j(-q^{9-3d}x/y;q^{12})J_{12}^3j(-q^{9+3d}/y^3;q^{12})}{\overline{J}_{0,3}\overline{J}_{0,12}j(-q^{6}x/y^4;q^{12})j(q^{3+3d}y/x;q^{12})},\notag
\end{align}
where
\begin{equation}
h_{4,4,1}(x,y,q,-1,-1)=j(x;q^4)m\big ( -q^3y/x,q^3,-1 \big )+j(y;q) m\big ( q^6 x/y^4, q^{12}, -1 \big ).\label{equation:h441}
\end{equation}
\end{corollary}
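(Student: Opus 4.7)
The plan is to obtain this corollary as a direct specialization of Theorem \ref{theorem:main-acdivb} at $n=4$, in exact parallel with the preceding Corollaries \ref{corollary:f221-expansion} and \ref{corollary:f331-expansion} (which are the cases $n=2$ and $n=3$). So the whole proof is a term-by-term substitution and simplification; no new ideas are needed.

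First I would specialize $h_{n,n,1}(x,y,q,-1,-1)$. Setting $n=4$, the first summand $j(x;q^n)m(-q^{n-1}yx^{-1},q^{n-1},-1)$ becomes $j(x;q^4)m(-q^3y/x,q^3,-1)$. For the second summand, $\binom{n}{2}=6$ and $n^2-n=12$; since $n=4$ is even, $(-y)^{-n}=y^{-4}$, so one obtains $j(y;q)m(q^6x/y^4,q^{12},-1)$, recovering (\ref{equation:h441}). The only point requiring attention is this parity remark, which is what makes the $n=4$ formula slightly cleaner than the $n=3$ formula in Corollary \ref{corollary:f331-expansion}.

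Next I would specialize the double theta sum $\theta_{n}(x,y,q)$. With $n=4$, the prefactor is $q^{(n-1)\binom{d+1}{2}}=q^{3\binom{d+1}{2}}$; the two numerator thetas become $j(q^{3+3d}y;q^4)$ and $j(-q^{9-3d}x/y;q^{12})$ (using $n(n-1)-(n-1)(d+1)=9-3d$); and the remaining numerator factor $j(q^{\binom{n}{2}+(n-1)(d+1)}(-y)^{1-n};q^{n(n-1)})$ becomes $j(-q^{9+3d}/y^3;q^{12})$, where the overall minus sign comes from $(-y)^{-3}=-y^{-3}$. In the denominator, $j(-q^{\binom{n}{2}}x(-y)^{-n};q^{n(n-1)})$ collapses to $j(-q^6x/y^4;q^{12})$, and $j(q^{(n-1)(d+1)}x^{-1}y;q^{n(n-1)})=j(q^{3+3d}y/x;q^{12})$. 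Finally the overall prefactor $1/(\overline{J}_{0,n-1}\overline{J}_{0,n^2-n})$ becomes $1/(\overline{J}_{0,3}\overline{J}_{0,12})$. Assembling these and inserting the minus sign from Theorem \ref{theorem:main-acdivb} gives exactly (\ref{equation:f441}).

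Thus there is no genuine obstacle, only bookkeeping. The only place a computational slip could occur is in the sign arithmetic involving the factors $(-y)^{1-n}$ and $(-y)^{-n}$, which must be distributed carefully; I would verify those two signs explicitly before writing down the final display.
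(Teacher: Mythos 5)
Your proposal is correct and coincides with the paper's own treatment: the paper states Corollary \ref{corollary:f441-expansion} (like Corollaries \ref{corollary:f221-expansion} and \ref{corollary:f331-expansion}) as an immediate specialization of Theorem \ref{theorem:main-acdivb} at $n=4$, with no further argument needed. Your substitutions check out, including the two sign-sensitive points — $(-y)^{-4}=y^{-4}$ in $h_{4,4,1}$ and $(-y)^{1-4}=-y^{-3}$ in the theta sum — which correctly produce $m(q^{6}x/y^{4},q^{12},-1)$ and $j(-q^{9+3d}/y^{3};q^{12})$.
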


\begin{corollary} \label{corollary:f551-expansion} We have
\begin{align}
f_{5,5,1}(x,y,q)&=h_{5,5,1}(x,y,q,-1,-1)\label{equation:f551}\\
&\ \ \ \ \ -\sum_{d=0}^{4}
q^{2d(d+1)}\frac{j\big (q^{4+4d}y;q^{5}\big )  j\big (-q^{16-4d}xy^{-1};q^{20}\big )J_{20}^3j\big (q^{14+4d}y^{-4};q^{20}\big )}
{\overline{J}_{0,4}\overline{J}_{0,20}j\big (q^{10}xy^{-5};q^{20})j(q^{4+4d}x^{-1}y;q^{20}\big )},\notag
\end{align}
where
\begin{equation}
h_{5,5,1}(x,y,q,z_1,z_0)
=j(x;q^5)m(-q^4x^{-1}y,q^4,z_1)+j(y;q)m(-q^{10}xy^{-5},q^{20},z_0).\label{equation:h551}
\end{equation}
\end{corollary}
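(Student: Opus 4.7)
The plan is to obtain Corollary \ref{corollary:f551-expansion} as the specialization $n=5$ of Theorem \ref{theorem:main-acdivb}. So the work is bookkeeping: substitute $n=5$ into each piece of the theorem and simplify the exponents and signs, then verify that the result matches \eqref{equation:f551}, \eqref{equation:h551}.

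First I would handle $h_{n,n,1}$. With $n=5$ we have $n-1=4$, $n^2-n=20$, and $\binom{n}{2}=10$. Since $n=5$ is odd, $(-y)^{-n}=(-y)^{-5}=-y^{-5}$, so the second summand $j(y;q)\,m\bigl(q^{\binom{n}{2}}x(-y)^{-n},q^{n^2-n},z_0\bigr)$ becomes $j(y;q)\,m(-q^{10}xy^{-5},q^{20},z_0)$, while the first summand becomes $j(x;q^5)\,m(-q^{4}x^{-1}y,q^{4},z_{1})$. Together these give \eqref{equation:h551}.

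Next I would expand $\theta_5(x,y,q)$. The relevant exponents simplify as $(n-1)\binom{d+1}{2}=2d(d+1)$, $(n-1)(d+1)=4+4d$, $n(n-1)-(n-1)(d+1)=16-4d$, and $\binom{n}{2}+(n-1)(d+1)=14+4d$. Using $(-y)^{1-n}=(-y)^{-4}=y^{-4}$ (even exponent, so no sign), and using $-q^{\binom{n}{2}}x(-y)^{-n}=-q^{10}x\cdot(-y^{-5})=q^{10}xy^{-5}$, the generic summand in $\theta_n$ becomes exactly the $d$-th summand appearing in \eqref{equation:f551}. Finally, the prefactor $1/(\overline{J}_{0,n-1}\overline{J}_{0,n^{2}-n})$ becomes $1/(\overline{J}_{0,4}\overline{J}_{0,20})$, which is pulled inside each summand as in \eqref{equation:f551}.

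There is no substantive obstacle here; the one point requiring care is consistent sign tracking, because $n=5$ is odd and so $(-y)^{-n}=-y^{-n}$ introduces a minus sign in both the argument of $m$ in the second summand of $h$ and in the argument of the theta function $j(-q^{\binom{n}{2}}x(-y)^{-n};q^{n(n-1)})$ in the denominator of $\theta_5$. Once these signs are correctly pushed through, the identity \eqref{equation:f551} follows by direct comparison.
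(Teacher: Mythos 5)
Your proposal is correct and is exactly how the paper obtains this corollary: the paper states it, without further argument, as the specialization $n=5$ of Theorem \ref{theorem:main-acdivb}, and your sign-tracking (in particular $(-y)^{-5}=-y^{-5}$ giving $m(-q^{10}xy^{-5},q^{20},z_0)$ and $j(q^{10}xy^{-5};q^{20})$, while $(-y)^{-4}=y^{-4}$ leaves $j(q^{14+4d}y^{-4};q^{20})$ signless) together with the exponent computations $2d(d+1)$, $4+4d$, $16-4d$, $14+4d$ matches the stated formula term by term. Nothing further is needed.
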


\begin{corollary}  \label{corollary:f661-expansion}  We have
\begin{align}
f_{6,6,1}(x,y,q)&=h_{6,6,1}(x,y,q,-1,-1)\label{equation:f661}\\
& \ \ \ \ \ - \sum_{d=0}^{5}
  \frac{q^{5\binom{d+1}{2}}J_{30}^3j\big (q^{5d+5}y;q^{6}\big )  j\big (-q^{25-5d}xy^{-1};q^{30}\big )j\big (-q^{ 20+5d}y^{-5};q^{30}\big )}
{\overline{J}_{0,5}\overline{J}_{0,30}j\big (-q^{15}xy^{-6};q^{30})j(q^{5d+5}x^{-1}y;q^{30}\big )},\notag
\end{align}
where 
\begin{equation}
h_{6,6,1}(x,y,q,z_1,z_0):=j(x;q^6)m( -q^{5}yx^{-1},q^{5},z_1 )
+j(y;q)m( q^{15}xy^{-6},q^{30},z_0  ).
\label{equation:h661}
\end{equation}
\end{corollary}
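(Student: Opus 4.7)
The plan is to obtain Corollary \ref{corollary:f661-expansion} as the direct specialization of Theorem \ref{theorem:main-acdivb} at $n = 6$. First I would substitute $n = 6$ into every occurrence of $n$ in the statement of the theorem: this produces $n - 1 = 5$, $\binom{n}{2} = 15$, $n^2 - n = 30$, and $n(n-1) = 30$. With these values the prefactor $1/(\overline{J}_{0,n-1}\overline{J}_{0,n^2-n})$ becomes $1/(\overline{J}_{0,5}\overline{J}_{0,30})$ and $J_{n(n-1)}^3$ becomes $J_{30}^3$, matching the denominators and the theta factor that appear in the claim.

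Next I would simplify $h_{n,n,1}(x,y,q,z_1,z_0)$ at $n = 6$. The first summand $j(x;q^n)m(-q^{n-1}yx^{-1}, q^{n-1}, z_1)$ gives $j(x;q^6)m(-q^5 y x^{-1}, q^5, z_1)$ at once, while the second requires the observation $(-y)^{-6} = y^{-6}$, producing $j(y;q)m(q^{15}xy^{-6}, q^{30}, z_0)$, exactly as in (\ref{equation:h661}).

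For the finite sum $\theta_6(x,y,q)$, the only nontrivial bookkeeping is the parity of the exponent of $-y$: since $n = 6$ is even, $(-y)^{1-n} = (-y)^{-5} = -y^{-5}$ contributes an extra minus sign inside the numerator theta, converting $j(q^{\binom{n}{2}+(n-1)(d+1)}(-y)^{1-n}; q^{n(n-1)})$ into $j(-q^{20+5d}y^{-5}; q^{30})$, whereas $(-y)^{-n} = y^{-6}$ contributes no sign, so $j(-q^{\binom{n}{2}}x(-y)^{-n}; q^{n(n-1)})$ becomes $j(-q^{15}xy^{-6}; q^{30})$. Collecting the remaining exponents $(n-1)\binom{d+1}{2} = 5\binom{d+1}{2}$, $(n-1)(d+1) = 5d+5$, and $n(n-1)-(n-1)(d+1) = 25-5d$ assembles the summand in (\ref{equation:f661}) on the nose. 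There is no real obstacle here — the proof is pure substitution, and the only place one could slip is in tracking the sign arising from $(-y)^{1-n}$ for even $n$.
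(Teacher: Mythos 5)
Your proposal is correct and coincides with the paper's own treatment: Corollary \ref{corollary:f661-expansion} is stated there simply as the specialization of Theorem \ref{theorem:main-acdivb} at $n=6$, and your substitution bookkeeping — in particular the sign flip $(-y)^{1-n}=(-y)^{-5}=-y^{-5}$ in the numerator theta versus the sign-free $(-y)^{-n}=y^{-6}$ in $h_{6,6,1}$ and in the denominator — is exactly what that specialization requires. Nothing further is needed.
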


We finish with a few example computations that can be found in the literature.
\begin{lemma}\label{lemma:f441-f331-evals}  We have
\begin{subequations}
\begin{gather}
f_{5,5,1}(q^5,q^4,q)=J_{2}J_{10},\\
f_{4,4,1}(-q^{5},q^{3},q)-q^{-1}f_{4,4,1}(-q^{3},q,q)
=-q^{-1}J_1^2,\\
f_{3,3,1}(-q^{4},q^{3},q)-q^{-1}f_{3,3,1}(-q^{2},q,q)
=-q^{-1}J_1J_{1,2}.
\end{gather}
\end{subequations}
\end{lemma}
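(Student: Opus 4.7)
The plan is to derive each evaluation from the expansion formulas of Section~\ref{section:double-sums}, handling the Appell--Lerch contribution and the theta-quotient sum separately.

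For $f_{5,5,1}(q^5,q^4,q)=J_2J_{10}$, I would apply Corollary~\ref{corollary:f551-expansion} at $x=q^5$, $y=q^4$. Both prefactors in $h_{5,5,1}(q^5,q^4,q,-1,-1)$ vanish since $j(q^5;q^5)=j(q^4;q)=0$, so the claim reduces to evaluating the five-term theta-quotient sum, one of whose summands is already zero. The remaining contributions can be simplified via the elliptic transformation~\eqref{equation:j-elliptic}, the reflection~\eqref{equation:1.7}, and the splitting identities~\eqref{equation:1.10} and~\eqref{equation:1.12}, collapsing to $J_2J_{10}$.

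For the two difference identities (b) and (c), the weights $(-q^{n+1},q^{n-1})$ and $(-q^{n-1},q)$ (with $n=4$ and $n=3$ respectively) are tailored so that the Appell--Lerch parts cancel. Indeed, applying Corollaries~\ref{corollary:f441-expansion} and~\ref{corollary:f331-expansion}, in each of the four $h_{n,n,1}$ evaluations the second summand vanishes because $y$ is a positive integral power of $q$, so $j(y;q)=0$. What remains is a common Appell--Lerch factor $m(q,q^{n-1},-1)$ multiplied by either $j(-q^{n+1};q^n)$ or $j(-q^{n-1};q^n)$. The former simplifies via~\eqref{equation:j-elliptic} to $q^{-1}j(-q;q^n)$, and the latter via~\eqref{equation:1.7} to $j(-q;q^n)$; subtracting $q^{-1}$ times the second $f$ from the first therefore annihilates the Appell--Lerch contributions.

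What remains is to simplify the differences of theta-quotient sums (eight terms for $f_{4,4,1}$, six terms for $f_{3,3,1}$). Using~\eqref{equation:j-elliptic} and~\eqref{equation:1.7} to align $j$-factors and pairing terms of matching index, most contributions cancel in pairs or vanish through integer-power zeros, and the few that survive combine via the Weierstrass three-term relation (Proposition~\ref{proposition:Weierstrass}) and standard product rearrangements into the claimed $-q^{-1}J_1^2$ and $-q^{-1}J_1J_{1,2}$. The main obstacle will be the bookkeeping in this final step, as each sum involves two distinct elliptic nomes whose signs and orientations must be tracked carefully. Since the lemma records that these evaluations appear in the literature, one may alternatively cite the derivations in~\cite{HM} and~\cite{Mort2021}.
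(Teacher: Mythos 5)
Your proposal is correct and takes essentially the same route as the paper: the paper's proof of this lemma is simply a citation to \cite[Section 7]{HM} and \cite[Lemma 3.11]{Mort2021}, which establish the three identities via exactly the specializations of Theorem \ref{theorem:main-acdivb} that you invoke, and your checkable structural claims---both prefactors of $h_{5,5,1}(q^5,q^4,q,-1,-1)$ vanish, one theta summand (the one at $d=3$, where $j(q^{20};q^5)=0$) drops out, and in (b), (c) the Appell--Lerch contributions cancel because $j(-q^{n+1};q^n)=q^{-1}j(-q;q^n)$ by \eqref{equation:j-elliptic} while $j(-q^{n-1};q^n)=j(-q;q^n)$ by \eqref{equation:1.7}, both multiplying the common factor $m(q,q^{n-1},-1)$---are all accurate. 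One tiny slip: for $n=3$ the first argument pair in (c) is $(-q^{4},q^{3})$ rather than your stated $(-q^{n+1},q^{n-1})=(-q^{4},q^{2})$, but this does not affect the argument, since the $j$-prefactors and $m$-arguments you actually compute with correspond to the correct values.
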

\begin{proof}[Proof of Lemma \ref{lemma:f441-f331-evals}]  All three identities have been shown using the appropriate specializations of Theorem \ref{theorem:main-acdivb}.  The first can be found in \cite[Section 7]{HM} and the last two in  \cite[Lemma 3.11]{Mort2021}.
\end{proof}


\section{Classical string function relations and Hecke-type double-sums}\label{section:classical-string-functions}
In this section we use the environment of Hecke-type double-sums and their functional equations such as (\ref{equation:f-shift}) and (\ref{equation:f-flip}) to show that
\begin{equation}
S_{m,\ell}^{N}(q):=\frac{q^{-\tfrac{1}{8}+\tfrac{(\ell+1)^2}{4(N+2)}-\tfrac{m^2}{4N}}}{J_{1}^3}
f_{1,1+N,1}(q^{1+\tfrac{1}{2}(m+\ell)},q^{1-\tfrac{1}{2}(m-\ell)},q).
\end{equation} 
satisfies the same classical symmetries for string functions.  We recall the classical identities \cite[(3.4), (3.5)]{SW}:
{\allowdisplaybreaks \begin{align}
S_{m,\ell}^{N}(q)&=S_{-m,\ell}^{N}(q),\label{equation:SW-3pt4}\\
S_{m,\ell}^{N}(q)&=S_{2N-m,\ell}^{N}(q),\label{equation:SW-3pt5A}\\
S_{m,\ell}^{N}(q)&=S_{N-m,N-\ell}^{N}(q). \label{equation:SW-3pt5B}
\end{align}}%

\begin{proof}[Proof for Identity (\ref{equation:SW-3pt4}):]
We have
{\allowdisplaybreaks \begin{align*}
S_{m,\ell}^N(q)
&=\frac{q^{-\tfrac{1}{8}+\tfrac{(\ell+1)^2}{4(N+2)}-\tfrac{m^2}{4N}}}{J_1^3}
\cdot f_{1,1+N,1}(q^{1+\tfrac{1}{2}(m+\ell)},q^{1-\tfrac{1}{2}(m-\ell)},q)\\
&=\frac{q^{-\tfrac{1}{8}+\tfrac{(\ell+1)^2}{4(N+2)}-\tfrac{m^2}{4N}}}{J_1^3}
\cdot f_{1,1+N,1}(q^{1-\tfrac{1}{2}(-m-\ell)},q^{1+\tfrac{1}{2}(-m+\ell)},q)\\
&=\frac{q^{-\tfrac{1}{8}+\tfrac{(\ell+1)^2}{4(N+2)}-\tfrac{(-m)^2}{4N}}}{J_1^3}
\cdot f_{1,1+N,1}(q^{1+\tfrac{1}{2}(-m+\ell)},q^{1-\tfrac{1}{2}(-m-\ell)},q)\\
&=S_{-m,\ell}^N(q),
\end{align*}}%
where symmetry allows us to write
\begin{equation*}
f_{1,1+N,1}(x,y,q)=f_{1,1+N,1}(y,x,q).\qedhere 
\end{equation*}
\end{proof}

\begin{proof}[Proof for Identity (\ref{equation:SW-3pt5A}):]
Specializing (\ref{equation:f-shift}) with $(R,S)=(1,-1)$ gives
{\allowdisplaybreaks \begin{align*}
f_{1,1+N,1}(q^{1+\tfrac{1}{2}(m+\ell)},q^{1-\tfrac{1}{2}(m-\ell)},q)
&=q^{m-N}
f_{1,1+N,1}(q^{-N+1+\tfrac{1}{2}(m+\ell)},q^{N+1-\tfrac{1}{2}(m-\ell)},q)\\
&=q^{m-N}
f_{1,1+N,1}(q^{1-\tfrac{1}{2}(2N-m-\ell)},q^{1+\tfrac{1}{2}(2N-m+\ell)},q)\\
&=q^{m-N}
f_{1,1+N,1}(q^{1+\tfrac{1}{2}(2N-m+\ell)},q^{1-\tfrac{1}{2}(2N-m-\ell)},q).
\end{align*}}%
As a result,
{\allowdisplaybreaks \begin{align*}
S_{m,\ell}^{N}&=\frac{q^{-\tfrac{1}{8}+\tfrac{(\ell+1)^2}{4(N+2)}-\tfrac{m^2}{4N}}}{J_{1}^3}f_{1,1+N,1}(q^{1+\tfrac{1}{2}(m+\ell)},q^{1-\tfrac{1}{2}(m-\ell)},q)\\
&=\frac{q^{-\tfrac{1}{8}+\tfrac{(\ell+1)^2}{4(N+2)}-\tfrac{m^2}{4N}+m-N}}{J_{1}^3}
f_{1,1+N,1}(q^{1+\tfrac{1}{2}(2N-m+\ell)},q^{1-\tfrac{1}{2}(2N-m-\ell)},q)\\
&=\frac{q^{-\tfrac{1}{8}+\tfrac{(\ell+1)^2}{4(N+2)}-\tfrac{(2N-m)^2}{4N}}}{J_{1}^3}
f_{1,1+N,1}(q^{1+\tfrac{1}{2}(2N-m+\ell)},q^{1-\tfrac{1}{2}(2N-m-\ell)},q)\\
&=S_{2N-m,\ell}^{N}.\qedhere
\end{align*}}%
\end{proof}

\begin{proof}[Proof for Identity (\ref{equation:SW-3pt5B}):]
Specializing (\ref{equation:f-shift}) with $(R,S)=(1,0)$ gives
{\allowdisplaybreaks \begin{align*}
f_{1,1+N,1}&(q^{1+\tfrac{1}{2}(m+\ell)},q^{1-\tfrac{1}{2}(m-\ell)},q)\\
&=-q^{1+\tfrac{1}{2}(m+\ell)}
\cdot f_{1,1+N,1}(q^{2+\tfrac{1}{2}(m+\ell)},q^{2+N-\tfrac{1}{2}(m-\ell)},q)\\
&=q^{1+\tfrac{1}{2}(m+\ell)}\cdot
q^{-1-\ell}
\cdot f_{1,1+N,1}(q^{1+N-\tfrac{1}{2}(m+\ell)},q^{1+\tfrac{1}{2}(m-\ell)},q)\\
&=q^{\tfrac{1}{2}(m-\ell)}
\cdot f_{1,1+N,1}(q^{1+\tfrac{1}{2}(N-m+N-\ell)},q^{1-\tfrac{1}{2}(N-m-(N-\ell))},q),
\end{align*}}%
where for the second equality we used (\ref{equation:f-flip}).  Hence
{\allowdisplaybreaks \begin{align*}
S_{m,\ell}^{N}
&=\frac{q^{-\tfrac{1}{8}+\tfrac{(\ell+1)^2}{4(N+2)}-\tfrac{m^2}{4N}}}{J_{1}^3}
\cdot f_{1,1+N,1}(q^{1+\tfrac{1}{2}(m+\ell)},q^{1-\tfrac{1}{2}(m-\ell)},q)\\
&=\frac{q^{-\tfrac{1}{8}+\tfrac{(\ell+1)^2}{4(N+2)}-\tfrac{m^2}{4N}+\tfrac{1}{2}(m-\ell)}}{J_{1}^3}
\cdot f_{1,1+N,1}(q^{1+\tfrac{1}{2}(N-m+N-\ell)},q^{1-\tfrac{1}{2}(N-m-(N-\ell))},q)\\
&=\frac{q^{-\tfrac{1}{8}+\tfrac{(N-\ell+1)^2}{4(N+2)}-\tfrac{(N-m)^2}{4N}}}{J_{1}^3}
\cdot f_{1,1+N,1}(q^{1+\tfrac{1}{2}(N-m+N-\ell)},q^{1-\tfrac{1}{2}(N-m-(N-\ell))},q)\\
&=S_{N-m,N-\ell}^{N}.\qedhere
\end{align*}}%
\end{proof}


\section{Deriving the general integral-level $N$ string function}\label{section:Level-gen-N}  

We derive (\ref{equation:gen-fabc-SW}).  We recall the notation from \cite{SW} that $m, \ N\in\mathbb{N}$,  $\ell \in \{ 0,1,2,\dots, N\}$, $m\equiv \ell \pmod 2$, where $N$ is the level.  In \cite[p. 236]{SW}, see also \cite[(3.17)]{DQ}, one finds the Hecke-type  form for the general integral-level string function:
\begin{proposition} We have
\begin{equation}
C_{m,\ell}^N(q)=S_{m,\ell}^{N}(q),
\end{equation}
where
\begin{equation}
S_{m,\ell}^{N}(q):=\frac{q^{-\tfrac{1}{8}+\tfrac{(\ell+1)^2}{4(N+2)}-\tfrac{m^2}{4N}}}{J_{1}^3}
f_{1,1+N,1}(q^{1+\tfrac{1}{2}(m+\ell)},q^{1-\tfrac{1}{2}(m-\ell)},q).
\end{equation}
\end{proposition}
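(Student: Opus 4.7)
The plan is to derive the proposition directly from formula (\ref{sffinal}), following the sketch in \cite[Example~1.3]{HM}. First, I would absorb the factor $q^{-1/8}$ into the prefactor using $\eta^3(\tau) = q^{1/8} J_1^3$, which reduces the claim to showing that the $(i,j)$-double-sum on the right-hand side of (\ref{sffinal}) equals $f_{1,1+N,1}\bigl(q^{1+(m+\ell)/2},\,q^{1-(m-\ell)/2},\,q\bigr)$.

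The central technical step is a linear change of variables on the summation lattice. Writing the inner bracket in (\ref{sffinal}) as $q^{A+B} - q^{A-B}$ with $A(i,j)=\tfrac12 i(i+m)+j((N+2)j+\ell+1)$ and $B(i,j)=\tfrac12 i(2(N+2)j+\ell+1)$, I would substitute $(r,s)=(i+j,\,j)$ in the $+q^{A+B}$ piece. A direct expansion confirms that $(-1)^i=(-1)^{r+s}$ and that $A+B$ equals $\binom{r}{2}+\binom{s}{2}+(N+1)rs+r(1+(m+\ell)/2)+s(1-(m-\ell)/2)$, which is precisely the $q$-exponent appearing in the summand of $f_{1,1+N,1}$ at the prescribed arguments; moreover the bijection carries $\{i\ge 1,\,j\in\Z\}$ onto the half-plane $\{(r,s)\in\Z^2:r>s\}$. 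For the $-q^{A-B}$ piece I would run the analogous computation with a complementary substitution, then invoke the interchange symmetry $f_{1,1+N,1}(x,y,q)=f_{1,1+N,1}(y,x,q)$ (valid since $a=c$, as used in the proof of (\ref{equation:SW-3pt4}) in Section~\ref{section:classical-string-functions}) to align the roles of $x$ and $y$, producing a sum indexed by $\{r<s\}$ with matching sign and exponent.

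The main obstacle is reconciling the resulting ``wedge'' form $\sum_{r>s}-\sum_{r<s}$ with the ``bi-octant'' form $\sum_{r,s\ge 0}-\sum_{r,s<0}$ that is built into the definition (\ref{equation:fabc-def}) of $f_{1,1+N,1}$. The plan is to apply the shift identity (\ref{equation:f-shift}) iteratively — together with the involution (\ref{equation:f-flip}) when needed — to transfer between the two descriptions; the correction terms generated are of the form $\sum_{k}(-x)^{k}q^{\binom{k}{2}}j(q^{kb}y;q^{c})$ and $\sum_{k}(-y)^{k}q^{\binom{k}{2}}j(q^{kb}x;q^{a})$. The crucial observation is that the arguments $x=q^{1+(m+\ell)/2}$ and $y=q^{1-(m-\ell)/2}$ are integer powers of $q$: since $m\equiv \ell\pmod 2$ forces $(m\pm\ell)/2\in\Z$, every boundary theta factor $j(q^{n};q)$ with $n\in\Z$ that arises in these corrections vanishes identically. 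Collapsing the correction terms to zero identifies the wedge form with the bi-octant form, and reinstating the prefactor $q^{-1/8+(\ell+1)^{2}/(4(N+2))-m^{2}/(4N)}/J_{1}^{3}$ then yields the stated equality $C_{m,\ell}^{N}(q)=S_{m,\ell}^{N}(q)$.
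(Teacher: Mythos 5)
Your opening steps are sound: $\eta^3(\tau)=q^{1/8}J_1^3$ handles the prefactor, and the substitution $(r,s)=(i+j,j)$ in the $q^{A+B}$ piece of (\ref{sffinal}) is correct — the exponent does become $\binom{r}{2}+(N+1)rs+\binom{s}{2}+r\bigl(1+\tfrac{m+\ell}{2}\bigr)+s\bigl(1-\tfrac{m-\ell}{2}\bigr)$ with $(-1)^i=(-1)^{r+s}$, so you arrive at the $f_{1,1+N,1}$-summand summed over the wedge $\{r>s\}$ minus $\{r<s\}$, which is just Kac--Peterson's indefinite form (\ref{equation:KP-fullGlory}) in new coordinates. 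The gap is your final step: converting this wedge sum into the bi-quadrant sum of definition (\ref{equation:fabc-def}) ``by iterating (\ref{equation:f-shift})''. The shift identity (\ref{equation:f-shift}) relates $f_{a,b,c}(x,y,q)$ to $f_{a,b,c}$ at shifted arguments, and \emph{both} sides are by definition sums over $\{r,s\ge0\}-\{r,s<0\}$; no number of applications of (\ref{equation:f-shift}) or (\ref{equation:f-flip}) changes the shape of the summation region. The wedge and the bi-quadrant differ not by finitely many rows or columns (which would indeed produce the vanishing factors $j(q^n;q)=0$ you invoke) but by the infinite cones $\{r\ge 0>s\}$, $\{s\ge 0>r\}$, and a doubly-counted octant. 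Worse, since $b=N+1>ac=1$ the quadratic form is indefinite: along $r=t$, $s=-t$ the exponent is $-Nt^2+O(t)\to-\infty$, so the discrepancy over those cones is not a theta function that can ``collapse to zero'' — the rearrangement is not even termwise convergent. Identifying the wedge form with the bi-quadrant form is precisely the content of the proposition, and your argument assumes it.

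The paper sidesteps this by not starting from (\ref{sffinal}): it quotes from \cite{SW} the identity $C_{m,\ell}^N(q)=\frac{q^{s(m,\ell,N)}}{(q)_{\infty}^3}\bigl\{\sum_{j\ge1,\,k\le0}-\sum_{j\le0,\,k\ge1}\bigr\}(-1)^{k-j}q^{\binom{k-j}{2}-Njk+\frac12 k(m-\ell)+\frac12 j(m+\ell)}$, whose summation regions map exactly onto $\{j,k\ge0\}-\{j,k<0\}$ under the trivial reindexing $k\mapsto-k$, $j\mapsto j+1$. The result is recognized as $-q^{1+\frac12(m+\ell)}f_{1,N+1,1}(q^{2+N-\frac12(m-\ell)},q^{2+\frac12(m+\ell)},q)$, and a single application of the functional equation (\ref{equation:fabc-fnq-1}) — whose theta term vanishes because $m\equiv\ell\pmod 2$, the one place where your vanishing observation genuinely applies — together with the $x\leftrightarrow y$ symmetry completes the proof. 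In other words, the hard wedge-to-quadrant conversion is outsourced to the Bailey-pair result of \cite{SW}. If you want to start from (\ref{sffinal}) as you propose, you need an actual shape-changing argument — diagonal slicing in the spirit of the paper's Proposition \ref{proposition:f1N1-fNN1}, or the machinery of \cite{HM} — not the shift identity.
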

We begin with an identity from \cite{SW}:
 \begin{equation}
C_{m,\ell}^N(q)=\frac{q^{s(m,\ell,N)}}{(q)_{\infty}^3}\Big \{ \sum_{\substack{j\ge 1\\ k\le 0}} - \sum_{\substack{j\le 0 \\ k\ge 1}} \Big \}(-1)^{k-j}q^{\binom{k-j}{2}-Njk+\tfrac{1}{2}k(m-\ell)+\tfrac{1}{2}j(m+\ell)}.
\end{equation}
where from (\ref{equation:s-def}):
\begin{equation}
s(m,\ell,N):=-\tfrac{1}{8}+\tfrac{(\ell+1)^2}{4(N+2)}-\tfrac{m^2}{4N}.
\end{equation}

We rewrite the above double-sum:
{\allowdisplaybreaks \begin{align*}
C_{m,\ell}^N(q)&=\frac{q^{s(m,\ell,N)}}{(q)_{\infty}^3} \sum_{\substack{j\ge 1\\ k\le 0}} (-1)^{k-j}q^{\binom{k-j}{2}-Njk+\tfrac{1}{2}k(m-\ell)+\tfrac{1}{2}j(m+\ell)} \\
& \ \ \ \ \ - \frac{q^{s(m,\ell,N)}}{(q)_{\infty}^3} \sum_{\substack{j\le 0 \\ k\ge 1}} (-1)^{k-j}q^{\binom{k-j}{2}-Njk+\tfrac{1}{2}k(m-\ell)+\tfrac{1}{2}j(m+\ell)} \\
&=\frac{q^{s(m,\ell,N)}}{(q)_{\infty}^3} \sum_{\substack{j\ge 1\\ k\ge 0}} (-1)^{k+j}q^{\binom{k+j+1}{2}+Njk-\tfrac{1}{2}k(m-\ell)+\tfrac{1}{2}j(m+\ell)} \\
& \ \ \ \ \ - \frac{q^{s(m,\ell,N)}}{(q)_{\infty}^3} \sum_{\substack{j\le 0 \\ k< 0}} (-1)^{k+j}q^{\binom{k+j+1}{2}+Njk-\tfrac{1}{2}k(m-\ell)+\tfrac{1}{2}j(m+\ell)} \\
&=\frac{q^{s(m,\ell,N)}}{(q)_{\infty}^3} \sum_{\substack{j\ge 0\\ k\ge 0}} (-1)^{k+j+1}q^{\binom{k+j+2}{2}+N(j+1)k-\tfrac{1}{2}k(m-\ell)+\tfrac{1}{2}(j+1)(m+\ell)} \\
& \ \ \ \ \ - \frac{q^{s(m,\ell,N)}}{(q)_{\infty}^3} \sum_{\substack{j< 0 \\ k< 0}} (-1)^{k+j+1}q^{\binom{k+j+2}{2}+N(j+1)k-\tfrac{1}{2}k(m-\ell)+\tfrac{1}{2}(j+1)(m+\ell)} \\
&=\frac{q^{s(m,\ell,N)}}{(q)_{\infty}^3} \Big ( \sum_{\substack{j\ge 0\\ k\ge 0}} - \sum_{\substack{j< 0 \\ k< 0}} \Big )  (-1)^{k+j+1}q^{\binom{k+j+2}{2}+N(j+1)k-\tfrac{1}{2}k(m-\ell)+\tfrac{1}{2}(j+1)(m+\ell)} \\
&=-\frac{q^{s(m,\ell,N)}}{(q)_{\infty}^3} q^{1+\frac{1}{2}(m+\ell)}
f_{1,N+1,1}(q^{2+N-\frac{1}{2}(m-\ell)}, q^{2+\frac{1}{2}(m+\ell)},q),
\end{align*}}%
where we have used $k\rightarrow -k$, $j\rightarrow j+1$.  We recall the identity (\ref{equation:fabc-fnq-1})
\begin{equation*}
f_{a,b,c}(x,y,q)=-yf_{a,b,c}(q^bx,q^cy,q)+j(x;q^a),
\end{equation*}
and the fact that $j(z;q)=0$ if and only if $z=q^{n}$, $n\in\mathbb{Z}$.   This gives
{\allowdisplaybreaks \begin{align*}
C_{m,\ell}^N(q)
&=\frac{q^{s(m,\ell,N)}}{(q)_{\infty}^3}\Big( 
f_{1,N+1,1}(q^{1-\frac{1}{2}(m-\ell)}, q^{1+\frac{1}{2}(m+\ell)},q) - j(q^{1+\frac{1}{2}(m+\ell)};q)\Big ) \\
&=\frac{q^{s(m,\ell,N)}}{J_1^3}\cdot f_{1,1+N,1}(q^{1-\tfrac{1}{2}(m-\ell)},q^{1+\tfrac{1}{2}(m+\ell)},q)\\
&=\frac{q^{s(m,\ell,N)}}{J_1^3}\cdot f_{1,1+N,1}(q^{1+\tfrac{1}{2}(m+\ell)},q^{1-\tfrac{1}{2}(m-\ell)},q)\\
&=S_{m,\ell}^{N}(q),
\end{align*}}%
for the second equality the condition $m\equiv \ell \pmod 2$ forces that
\begin{equation}
j(q^{1+\frac{1}{2}(m+\ell)};q)=0.
\end{equation}
For the third equality, the $x$ and $y$ in $f_{1,N+1,1}(x,y,q)$ can be swapped because of symmetry in the definition of (\ref{equation:fabc-def}).


\section{Proof of the main theorem and its two corollaries}\label{section:new-string-functions}

The driving force of our proof is the following proposition.

\begin{proposition} \label{proposition:f1N1-fNN1}We have
\begin{align}
&f_{1,2K+1,1}(q^{d},q^{e},q)\pm q^{\frac{K+d+e}{2}}f_{1,2K+1,1}(q^{1+K+d},q^{1+K+e},q)\label{equation:KP-disguise}\\
&\ \ \ =f_{K+1,K+1,1}(\mp q^{(K+d+e)/2},q^{d},q)
 \mp q^{(K+2-d-e)/2}f_{K+1,K+1,1}(\mp q^{2+(3K-d-e)/2},q^{K+2-e},q).\notag
\end{align}
\end{proposition}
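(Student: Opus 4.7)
The plan is to reduce each side to a single Hecke-type double-sum with an extra bracket factor, and then match the two resulting sums by a change of summation variable.

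\smallskip
The first step is to factor each side. Since $f_{1,2K+1,1}(q^{1+K+d},q^{1+K+e},q)$ is obtained from $f_{1,2K+1,1}(q^{d},q^{e},q)$ by multiplying both arguments by $q^{K+1}$, which inserts the factor $q^{(K+1)(r+s)}$ in the summand, the left-hand side collapses to
\begin{equation*}
\sum_{\operatorname{sg}(r)=\operatorname{sg}(s)}\operatorname{sg}(r)(-1)^{r+s}q^{dr+es+\binom{r}{2}+(2K+1)rs+\binom{s}{2}}\bigl(1\pm q^{(K+d+e)/2+(K+1)(r+s)}\bigr).
\end{equation*}
A direct check shows that the second $f_{K+1,K+1,1}$ on the right-hand side is obtained from the first by multiplying \emph{both} arguments by $q^{K+2-d-e}$; this is the content of the pair of identities $2+(3K-d-e)/2=(K+d+e)/2+(K+2-d-e)$ and $K+2-e=d+(K+2-d-e)$. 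Consequently the right-hand side collapses to
\begin{equation*}
\sum_{\operatorname{sg}(r)=\operatorname{sg}(s)}\operatorname{sg}(r)(-1)^{r+s}(\mp)^{r}q^{r(K+d+e)/2+sd+(K+1)\binom{r}{2}+(K+1)rs+\binom{s}{2}}\bigl(1\mp q^{(K+2-d-e)/2+(K+2-d-e)(r+s)}\bigr).
\end{equation*}

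\smallskip
The second step is to introduce $t=r+s$ on the left-hand side and $t'=(K+1)r+s$ on the right-hand side. The two algebraic identities
\begin{equation*}
\binom{r}{2}+(2K+1)rs+\binom{s}{2}=\binom{t}{2}+2Kr(t-r),\qquad (K+1)\binom{r}{2}+(K+1)rs+\binom{s}{2}=\binom{t'}{2}-\tfrac{K(K+1)}{2}r^{2}
\end{equation*}
rewrite each double-sum as an outer sum over $r$ of a (Gaussian-in-$r$) prefactor times an inner sum over $t$ or $t'$ whose exponent is the theta-like form $\binom{t}{2}+\alpha t$. Because $t'=Kr+t$, the two $T$-variables differ by the shift $Kr$, and the two bracket factors, rewritten in the common variable $t$, differ only by a $q$-power that is linear in $r$.

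\smallskip
The third step is to collapse the $\operatorname{sg}(r)=\pm 1$ case distinction on the right-hand side. For each fixed $r$, the inner $t'$-sum over $\operatorname{sg}(s)=\operatorname{sg}(r)$ is the tail $\sum_{t'\ge(K+1)r}$ for $r\ge 0$ and minus the head $\sum_{t'<(K+1)r}$ for $r<0$; since $j(q^{d};q)=j(q^{K+2-e};q)=0$ the head equals minus the tail, so both cases collapse into the single expression $\sum_{t'\ge(K+1)r}(-1)^{t'}q^{dt'+\binom{t'}{2}}$ valid for all $r\in\mathbb{Z}$, and similarly for the second right-hand side term after its own shift. Reindexing $t'=t+Kr$ aligns this with the analogous tail $\sum_{t\ge r}$ appearing on the left-hand side, and term-by-term matching of the two outer-$r$ sums completes the proof. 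The two bracket exponents coincide after reindexing because of the arithmetic identity $(K+2-d-e)+(K+d+e)=2(K+1)$.

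\smallskip
\textbf{Main obstacle.} The delicate point is the bookkeeping in the third step: the reindexing $t'\mapsto t+Kr$ interacts with the Gaussian prefactors $q^{-2Kr^{2}}$ (LHS) and $q^{-K(K+1)r^{2}/2}$ (RHS) and with the sign $(\mp)^{r}$ on the RHS, and these must be shown to cancel precisely against the linear-in-$r$ discrepancy between the two bracket exponents. This amounts to an explicit verification on the $q$-exponents that, after the change of variables, the LHS and RHS summands agree term by term in $(r,t)$; the sign conventions $\operatorname{sg}(\cdot)$ and the convention $\sum_{n=0}^{-1}=0$ must be tracked carefully at the boundary values $r=0$ or $s=0$ to rule out spurious contributions, with the vanishing $j(q^{k};q)=0$ for $k\in\mathbb{Z}$ providing the mechanism for absorbing any such terms.
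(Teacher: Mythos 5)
Your opening moves are sound and partially parallel the paper: the bracket factorizations of both sides are correct (your identities $2+(3K-d-e)/2=(K+d+e)/2+(K+2-d-e)$ and $K+2-e=d+(K+2-d-e)$ check out), as do the two quadratic-form identities in your second step and the use of vanishing theta functions to collapse the $\sg(r)=\sg(s)$ case distinction into uniform tails (after $t'=t+Kr$ the condition $t'\ge(K+1)r$ does become $t\ge r$). The genuine gap is your final step: the promised term-by-term matching in $(r,t)$ is false, and no exponent bookkeeping will rescue it. Carrying out your own reductions, the left side becomes $\sum_{r}q^{-2Kr^{2}+(d-e)r}\sum_{t\ge r}(-1)^{t}\bigl[q^{\binom{t}{2}+(e+2Kr)t}\pm q^{(K+d+e)/2+\binom{t}{2}+(e+K+1+2Kr)t}\bigr]$, while after the shift $t'=t+Kr$ the right side becomes $\sum_{r}(\mp1)^{r}q^{-\frac{K}{2}r^{2}+\frac{e-d}{2}r}\sum_{t\ge r}(-1)^{t}\bigl[q^{\binom{t}{2}+(d+Kr)t}\mp q^{(K+2-d-e)/2+\binom{t}{2}+(K+2-e+Kr)t}\bigr]$. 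The Gaussian rates ($-2K$ versus $-K/2$), the cross terms ($2Krt$ versus $Krt$), and the inner linear coefficients ($e+2Kr$ versus $d+Kr$ or $K+2-e+Kr$) do not align under any integral reindexing: forcing $d+Kr'=e+2Kr$ needs $r'=2r+(e-d)/K\notin\mathbb{Z}$ in general. (Setting $r'=2r$ on the right does reproduce the summand of the left's first piece written with outer variable $s$, but then the tails disagree, $t\ge s$ versus $t\ge 2s$, leaving uncancelled partial-theta chunks, and the odd-$r'$ terms have no partner at all.) Your claim that the two brackets ``differ only by a $q$-power linear in $r$'' is also wrong: they are functions of $t=r+s$ with different frequencies, $K+1$ versus $K+2-d-e$, which coincide only when $d+e=1$.

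The paper avoids all of this by transposing your decomposition. It merges the two left-hand sums into one sum over the lattice $u\equiv v\pmod 2$, passes to diagonal coordinates $n=(u+v)/2$, $j=(u-v)/2$, and takes $n=r+s$ as the \emph{outer} variable, so that the inner sum at fixed $n$ is the \emph{finite} symmetric sum $\sum_{j=-n}^{n}$; the reflection $n\mapsto -n-1$ (legitimate precisely because the inner range is symmetric in $j$) then produces the bracket $\bigl(1-q^{(K+2-d-e)(n+\frac12)}\bigr)$, and the region condition $\sg(j)=\sg(n-j)$ identifies the two $f_{K+1,K+1,1}$ sums on the right. In other words, the bracket you extract on the left (frequency $K+1$, from parity merging) and the bracket on the right (frequency $K+2-d-e$, from reflection of a symmetric finite inner sum) arise from two different mechanisms, and equating them per-term, as your third step requires, is exactly what fails. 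To repair your argument you would have to abandon the infinite inner tails in $t$ and redo the analysis with a finite inner sum over $j=r-s$ at fixed $t$, i.e., reproduce the paper's reflection argument.
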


\begin{proof}[Proof of Proposition \ref{proposition:f1N1-fNN1}] We have
{\allowdisplaybreaks \begin{align*}
&f_{1,2K+1,1}(q^{d},q^{e},q)+q^{\frac{K+d+e}{2}}f_{1,2K+1,1}(q^{1+K+d},q^{1+K+e},q)\\
&=\sum_{\substack{u,v\\u\equiv v \mod 2}}\sg(u,v)(-1)^{\frac{u-v}{2}}q^{\frac{1}{8}u^2+\frac{2K+1}{4}uv+\frac{1}{8}v^2+\frac{2d-1}{4}u+\frac{2e-1}{4}v}\\
&=\Big ( \sum_{\substack{n+j\ge0\\n-j\ge 0}}- \sum_{\substack{n+j < 0\\n-j < 0}}\Big )
(-1)^jq^{(K+1)n^2/2+(d+e-1)n/2-Kj^2/2+(d-e)j/2}\\
&= \sum_{n\ge0}q^{(K+1)n^2/2+(d+e-1)n/2}\sum_{j=-n}^{n}(-1)^jq^{-Kj^2/2+(d-e)j/2}\\
& \ \ \ \ \  -\sum_{n<0}q^{(K+1)n^2/2+(d+e-1)n/2}\sum_{n<j<-n}(-1)^jq^{-Kj^2/2+(d-e)j/2}\\
&= \sum_{n\ge0}q^{(K+1)n^2/2+(d+e-1)n/2}\sum_{j=-n}^{n}(-1)^jq^{-Kj^2/2+(d-e)j/2}\\
&\ \ \ \ \ \ -\sum_{n\ge0}q^{(K+1)(-n-1)^2/2+(d+e-1)(-n-1)/2}\sum_{-n-1<j<n+1}(-1)^jq^{-Kj^2/2+(d-e)j/2}\\
&= \sum_{n\ge0}q^{(K+1)n^2/2+(d+e-1)n/2}\sum_{j=-n}^{n}(-1)^jq^{-Kj^2/2+(d-e)j/2}\\
&\ \ \ \ \ \ -\sum_{n\ge0}q^{(K+1)n^2/2+(2K+3-d-e)n/2+(K+2-d-e)/2}\sum_{j=-n}^n(-1)^jq^{-Kj^2/2+(d-e)j/2}\\
&= \sum_{\sg(j)=\sg(n-j)}\sg(j)(-1)^jq^{(K+1)n^2/2+(d+e-1)n/2-Kj^2/2+(d-e)j/2}\\
& \ \ \ \ \ \cdot (1-q^{(K+2-d-e)n+(K+2-d-e)/2})\\
&=f_{K+1,K+1,1}(-q^{(K+d+e)/2},q^{d},q)\\
& \ \ \ \ \ -q^{(K+2-d-e)/2}f_{K+1,K+1,1}(-q^{2+(3K-d-e)/2},q^{K+2-e},q).
\end{align*}}%
For the second identity one replaces $(-1)^{(u-v)/2}$ with $(-1)^{(u+v)/2}$.  This also results in the $(-1)^j$ becoming $(-1)^n$. 
\end{proof}

\begin{proof} [Proof of Theorem \ref{theorem:main-result}]  We prove the identity for the initial plus sign.  The proof for the minus sign is analogous.  For (\ref{equation:OP-split-1}), we have
{\allowdisplaybreaks \begin{align*}
f_{1,2K+1,1}&(q^{1+\frac{1}{2}(m+\ell)},q^{1-\frac{1}{2}(m-\ell)},q)
+q^{-(\frac{K-m}{2})}f_{1,2K+1,1}(q^{1+\frac{1}{2}(2K-m+\ell)},q^{1-\frac{1}{2}(2K-m-\ell)},q)\\
&=f_{1,2K+1,1}(q^{1+\frac{1}{2}(m+\ell)},q^{1-\frac{1}{2}(m-\ell)},q)\\
& \ \ \ \ \  -q^{1+\frac{1}{2}(K+\ell)}f_{1,2K+1,1}(q^{1+K+1-\frac{1}{2}(m-\ell)},q^{1+K+1+\frac{1}{2}(m+\ell)},q)\\
 &=f_{1,2K+1,1}(q^{1+\frac{1}{2}(m+\ell)},q^{1-\frac{1}{2}(m-\ell)},q)\\
& \ \ \ \ \  -q^{1+\frac{1}{2}(K+\ell)}f_{1,2K+1,1}(q^{1+K+1+\frac{1}{2}(m+\ell)},q^{1+K+1-\frac{1}{2}(m-\ell)},q)\\
 &=f_{K+1,K+1,1}(q^{1+\frac{1}{2}(K+\ell)},q^{1+\frac{1}{2}(m+\ell)},q)\\
& \ \ \ \ \  +q^{\frac{1}{2}(K-\ell)}f_{K+1,K+1,1}(q^{1+K+\frac{1}{2}(K-\ell)},q^{1+K+\frac{1}{2}(m-\ell)},q),
\end{align*}}%
where the first equality follows (\ref{equation:f-shift}) with $(R,S)=(1,0)$ and the third from (\ref{equation:KP-disguise}).  \qedhere
\end{proof}

\begin{proof}[Proof of Corollaries \ref{corollary:f1K1-fKK1-A} and \ref{corollary:f1K1-fKK1-B}]  To prove (\ref{equation:OP-2}), we let $\ell=K$ in (\ref{equation:OP-split-1}).   To prove (\ref{equation:OP-3}), we set $m=K$ in (\ref{equation:OP-split-1}) to have
{\allowdisplaybreaks \begin{align*}
2C_{K,\ell}^{2K}(q)
&=\frac{q^{\lambda(K,\ell,2K)}}{J_{1}^3}\Big ( 
f_{K+1,K+1,1}(q^{1+\frac{1}{2}(K+\ell)},q^{1+\frac{1}{2}(K+\ell)},q)\\
& \ \ \ \ \ +q^{\frac{1}{2}(K-\ell)}f_{K+1,K+1,1}(q^{1+K+\frac{1}{2}(K-\ell)},q^{1+K+\frac{1}{2}(K-\ell)},q)\Big )\\
&=\frac{q^{\lambda(K,\ell,2K)}}{J_{1}^3}\Big ( 
f_{K+1,K+1,1}(q^{1+\frac{1}{2}(K+\ell)},q^{1-\frac{1}{2}(K-\ell)},q)\\
& \ \ \ \ \ +q^{\frac{1}{2}(K-\ell)}f_{K+1,K+1,1}(q^{1+K+\frac{1}{2}(K-\ell)},q^{1+K+\frac{1}{2}(K-\ell)},q)\Big )\\
&=\frac{q^{\lambda(K,\ell,2K)}}{J_{1}^3}\Big ( 
f_{K+1,K+1,1}(q^{1+\frac{1}{2}(K+\ell)},q^{1-\frac{1}{2}(K-\ell)},q)\\
& \ \ \ \ \ -q^{1-\frac{1}{2}(K-\ell)}f_{K+1,K+1,1}(q^{2+K+\frac{1}{2}(K+\ell)},q^{2-\frac{1}{2}(K-\ell)},q)\Big )\\
&=\frac{q^{\lambda(K,\ell,2K)}}{J_{1}^3}\Big ( 
2f_{K+1,K+1,1}(q^{1+\frac{1}{2}(K+\ell)},q^{1-\frac{1}{2}(K-\ell)},q)\Big ),
\end{align*}}%
where the second equality follows from (\ref{equation:f-shift}) with $(R,S)=(-1,1)$, the third equality follows from (\ref{equation:f-flip}), and the fourth follows from (\ref{equation:f-shift}) with $(R,S)=(0,-1)$.
\end{proof}


\section{Computing level $N=2$ string functions}  \label{section:LevelN2} 

\subsection{The string function  $c_{11}^{11}$ (\ref{equation:KP-2-orig})}\label{section:N2m1ell1}  It suffices to show (\ref{equation:KP-2-Hecke}):
\begin{equation}
f_{2,2,1}(q^2,q,q)=J_{1}J_{2}.
\end{equation}
In Corollary \ref{corollary:f221-expansion}, we find potential singularities, so we consider
\begin{equation*}
\lim_{x\rightarrow q}f_{2,2,1}(x^2,x,q).
\end{equation*}
We first focus on (\ref{equation:g221-id}) and write
{\allowdisplaybreaks \begin{align*}
\lim_{x\rightarrow q}h_{2,2,1}(x^2,x,q)
&=\lim_{x\rightarrow q}\Big [ j(x^2;q^2)m(-qx^{-1},q,-1)+j(x;q)m(q,q^2,-1)\Big ] \\
&=\lim_{x\rightarrow q}\Big [ j(x^2;q^2)m(-qx^{-1},q,-1)\Big ] \\
&=\lim_{x\rightarrow q} j(x^2;q^2)\Big [m(-qx^{-1},q,z)+\frac{zJ_{1}^3j(-z^{-1};q)j(qz/x;q)}
{j(-1;q)j(z;q)j(q/x;q)j(-qz/x;q)}\Big ]\\
&=\lim_{x\rightarrow q} j(x^2;q^2)\cdot \frac{zJ_{1}^3j(-z^{-1};q)j(qz/x;q)}
{j(-1;q)j(z;q)j(q/x;q)j(-qz/x;q)},
\end{align*}}%
where we have used (\ref{equation:changing-z-theorem}) in the penultimate equality.  Continuing with product rearrangements, we have
{\allowdisplaybreaks \begin{align*}
\lim_{x\rightarrow q}h_{2,2,1}(x^2,x,q)
&=\lim_{x\rightarrow q} \frac{j(x;q)j(-x;q)J_2}{J_1^2}\cdot \frac{zJ_{1}^3j(-z^{-1};q)j(qz/x;q)}
{j(-1;q)j(z;q)j(q/x;q)j(-qz/x;q)}\\
&=\lim_{x\rightarrow q} \frac{j(x;q)j(-x;q)J_2}{J_1^2}\cdot \frac{J_{1}^3j(-z;q)j(qz/x;q)}
{j(-1;q)j(z;q)j(x;q)j(-qz/x;q)}\\
&= J_{1}J_2,
\end{align*}}%
where in the second line we used (\ref{equation:j-elliptic}) and (\ref{equation:1.7}).  Now we consider 
{\allowdisplaybreaks \begin{align*}
&\lim_{x\rightarrow q}
\sum_{d=0}^{1}
\frac{q^{\binom{d+1}{2}}j\big (q^{1+d}x;q^{2}\big )  j\big (-q^{1-d}x;q^{2}\big ) 
J_{2}^3j\big (-q^{2+d}/x;q^{2}\big )}
{4\overline{J}_{1,4}\overline{J}_{2,8}j\big (-q;q^2\big )j\big (q^{1+d}/x;q^2\big )}\\
&=\lim_{x\rightarrow q}
\Big [ 
\frac{j\big (qx;q^{2}\big )  j\big (-qx;q^{2}\big ) 
J_{2}^3j\big (-q^{2}/x;q^{2}\big )}
{4\overline{J}_{1,4}\overline{J}_{2,8}\overline{J}_{1,2}j\big (q/x;q^2\big )}
+
\frac{q j\big (q^{2}x;q^{2}\big )  j\big (-x;q^{2}\big ) 
J_{2}^3j\big (-q^{3}/x;q^{2}\big )}
{4\overline{J}_{1,4}\overline{J}_{2,8}\overline{J}_{1,2}j\big (q^{2}/x;q^2\big )}
\Big ] \\
&=\lim_{x\rightarrow q}
\Big [ 
\frac{j\big (qx;q^{2}\big )  j\big (-qx;q^{2}\big ) 
J_{2}^3j\big (-x;q^{2}\big )}
{4\overline{J}_{1,4}\overline{J}_{2,8}\overline{J}_{1,2}j\big (qx;q^2\big )}
-
\frac{ j\big (x;q^{2}\big )  j\big (-x;q^{2}\big ) 
J_{2}^3j\big (-qx;q^{2}\big )}
{4\overline{J}_{1,4}\overline{J}_{2,8}\overline{J}_{1,2}j\big (x;q^2\big )}
\Big ] \\
&=\lim_{x\rightarrow q}
\Big [ 
\frac{  j\big (-qx;q^{2}\big ) 
J_{2}^3j\big (-x;q^{2}\big )}
{4\overline{J}_{1,4}\overline{J}_{2,8}\overline{J}_{1,2}}
-
\frac{  j\big (-x;q^{2}\big ) 
J_{2}^3j\big (-qx;q^{2}\big )}
{4\overline{J}_{1,4}\overline{J}_{2,8}\overline{J}_{1,2}}
\Big ]\\
&=0,
\end{align*}}%
where in the second equality we used (\ref{equation:j-elliptic}) and (\ref{equation:1.7}), and in the third equality we simplified.  Assembling the pieces, we have
\begin{equation}
\lim_{x\rightarrow q}f_{2,2,1}(x^2,x,q)=J_{1}J_{2}.
\end{equation}

\section{Computing level $N=4$ string functions}  \label{section:LevelN4}

\subsection{The string function  $c_{40}^{40}-c_{04}^{40}$ (\ref{equation:KP-4B-orig})}  It suffices to show (\ref{equation:KP-4B-Hecke}):
\begin{equation}
f_{3,3,1}(-q^2,q,q)-qf_{3,3,1}(-q^4,q^3,q)=J_1J_{1,2},
\end{equation}
which is true by Lemma \ref{lemma:f441-f331-evals}.


\section{Computing level $N=6$ string functions} \label{section:LevelN6} 

\subsection{The string function  $c_{51}^{33}$ (\ref{equation:KP-6A-orig})}  It suffices to show (\ref{equation:KP-6A-Hecke}):
\begin{equation}
f_{4,4,1}(q^4,q^3,q)=J_{2}J_{3,12}.
\end{equation}
In Corollary \ref{corollary:f441-expansion} we immediately see that
\begin{equation}
h_{4,4,1}(q^4,q^3,q)=0.
\end{equation}
Hence, it follows from Corollary \ref{corollary:f441-expansion} that
{\allowdisplaybreaks \begin{align*}
f_{4,4,1}&(q^4,q^3,q)\\
&= -\sum_{d=0}^3\frac{q^{3\binom{d+1}{2}}j(q^{6+3d};q^4)j(-q^{10-3d};q^{12})J_{12}^3j(-q^{3d};q^{12})}
{\overline{J}_{0,3}\overline{J}_{0,12}j(-q^{-2};q^{12})j(q^{2+3d};q^{12})}\\
&=
-q^{2}\frac{j(q^{6};q^4)\overline{J}_{2,12}J_{12}^3}
{\overline{J}_{0,3}\overline{J}_{2,12}J_{2,12}}
-\frac{q^{5}j(q^{9};q^4)\overline{J}_{7,12}J_{12}^3\overline{J}_{3,12}}
{\overline{J}_{0,3}\overline{J}_{0,12}\overline{J}_{2,12}J_{7,12}}
-\frac{q^{20}j(q^{15};q^4)J_{12}^3\overline{J}_{1,12}\overline{J}_{3,12}}
{\overline{J}_{0,3}\overline{J}_{0,12}\overline{J}_{2,12}J_{1,12}}\\
&=
q^{-1}\frac{J_{1,4}J_{12}^3\overline{J}_{3,12}}{\overline{J}_{0,3}\overline{J}_{0,12}\overline{J}_{2,12}}
\cdot \Big( \frac{\overline{J}_{1,12}}{J_{1,12}}-\frac{\overline{J}_{7,12}}{J_{7,12}}\Big ) 
+\frac{J_{2,4}J_{12}^3}
{\overline{J}_{0,3}J_{2,12}}\\
&=
q^{-1}\frac{J_{1,4}J_{12}^3\overline{J}_{3,12}}{\overline{J}_{0,3}\overline{J}_{0,12}\overline{J}_{2,12}}
\cdot \Big( \frac{\overline{J}_{1,12}J_{7,12}-J_{1,12}\overline{J}_{7,12}}{J_{1,12}J_{7,12}}\Big ) 
+\frac{J_{2,4}J_{12}^3}
{\overline{J}_{0,3}J_{2,12}}\\
&=
q^{-1}\frac{J_{1,4}J_{12}^3\overline{J}_{3,12}}{\overline{J}_{0,3}\overline{J}_{0,12}\overline{J}_{2,12}}
\cdot \Big( \frac{2qJ_{6,24}J_{4,24}}{J_{1,12}J_{7,12}}\Big ) 
+\frac{J_{2,4}J_{12}^3}
{\overline{J}_{0,3}J_{2,12}}\\
&=J_{2}J_{3,12},
\end{align*}}%
where we have used (\ref{equation:j-elliptic}), collected terms, and then used (\ref{equation:H1Thm1.2A}).

\subsection{The string function $c_{51}^{51}+c_{15}^{51}$ (\ref{equation:KP-6B-orig})}  It suffices to show (\ref{equation:KP-6B-Hecke}):
\begin{equation}
f_{4,4,1}(q^3,q^2,q)+qf_{4,4,1}(q^5,q^4,q)=J_{2}J_{6,12}.
\end{equation}
We use Corollary \ref{corollary:f441-expansion}.  Considering (\ref{equation:h441}), we see that two of the four summands vanish right away giving us
{\allowdisplaybreaks \begin{align*}
h_{4,4,1}(q^3,q^2,q)+qh_{4,4,1}(q^5,q^4,q)
&=j(q^3;q^4)m(-q^4,q^5,-1)+qj(q^5;q^4)m(-q^4,q^5,-1)\\
&=j(q^3;q^4)m(-q^4,q^5,-1)-j(q;q^4)m(-q^4,q^5,-1)\\
&=0,
\end{align*}}%
where we have used (\ref{equation:j-elliptic}) and (\ref{equation:1.7}).   Thus we have
{\allowdisplaybreaks \begin{align*}
f_{4,4,1}&(q^3,q^2,q)+qf_{4,4,1}(q^5,q^4,q)\\
&=-\sum_{d=0}^3\frac{q^{3\binom{d+1}{2}}j(q^{5+3d};q^4)j(-q^{10-3d};q^{12})J_{12}^3j(-q^{3+3d};q^{12})}{\overline{J}_{0,3}\overline{J}_{0,12}j(-q;q^{12})j(q^{2+3d};q^{12})}\\
& \ \ \ \ \ -q\sum_{d=0}^3\frac{q^{3\binom{d+1}{2}}j(q^{7+3d};q^4)j(-q^{10-3d};q^{12})J_{12}^3j(-q^{-3+3d};q^{12})}{\overline{J}_{0,3}\overline{J}_{0,12}j(-q^{-5};q^{12})j(q^{2+3d};q^{12})}\\
&= \frac{J_{12}^3}{\overline{J}_{0,3}\overline{J}_{0,12}}\Big [ q^{-1}\frac{J_{1,4}\overline{J}_{2,12}\overline{J}_{3,12}}{\overline{J}_{1,12}J_{2,12}}+0
-q^{-1}\frac{J_{1,4}\overline{J}_{4,12}\overline{J}_{3,12}}{\overline{J}_{1,12}J_{4,12}}
+\frac{J_{2,4}\overline{J}_{0,12}}{J_{1,12}}\Big ] \\
& \ \ \ \ \ +\frac{J_{12}^3}{\overline{J}_{0,3}\overline{J}_{0,12}}
\Big [ \frac{J_{1,4}\overline{J}_{2,12}\overline{J}_{3,12}}{\overline{J}_{5,12}J_{2,12}}
-q\frac{J_{2,4}\overline{J}_{0,12}}{J_{5,12}}
+\frac{J_{1,4}\overline{J}_{4,12}\overline{J}_{3,12}}{\overline{J}_{5,12}J_{4,12}}+0\Big ] ,
\end{align*}}%
where we have used (\ref{equation:j-elliptic}), (\ref{equation:1.7}), and simplified.  Regrouping terms, using (\ref{equation:1.7}), and simplifying, we have
{\allowdisplaybreaks \begin{align*}
f_{4,4,1}&(q^3,q^2,q)+qf_{4,4,1}(q^5,q^4,q)\\
&= q^{-1}\frac{J_{12}^3J_{1,4}\overline{J}_{3,12}}{\overline{J}_{0,3}\overline{J}_{0,12}\overline{J}_{1,12}}
\cdot \frac{\overline{J}_{2,12}J_{4,12}-\overline{J}_{4,12}J_{2,12}}{J_{2,12}J_{8,12}}
+\frac{J_{12}^3J_{2,4}}{\overline{J}_{0,3}}\cdot \frac{J_{5,12}-qJ_{11,12}}{J_{1,12}J_{7,12}} \\
& \ \ \ \ \ +\frac{J_{12}^3J_{1,4}\overline{J}_{3,12}}{\overline{J}_{0,3}\overline{J}_{0,12}\overline{J}_{5,12}}
\cdot \frac{\overline{J}_{2,12}J_{4,12}+\overline{J}_{4,12}J_{2,12}}{J_{2,12}J_{8,12}}\\
&= q^{-1}\frac{J_{12}^3J_{1,4}\overline{J}_{3,12}}{\overline{J}_{0,3}\overline{J}_{0,12}\overline{J}_{1,12}}
\cdot \frac{J_6}{J_{2,6}J_{12}^2}\cdot \Big ( \overline{J}_{2,12}J_{4,12}-\overline{J}_{4,12}J_{2,12}\Big ) \\
& \ \ \ \ \ +\frac{J_{12}^3J_{1,4}\overline{J}_{3,12}}{\overline{J}_{0,3}\overline{J}_{0,12}\overline{J}_{5,12}}
\cdot \frac{J_6}{J_{2,6}J_{12}^2}
\cdot \Big ( \overline{J}_{2,12}J_{4,12}+\overline{J}_{4,12}J_{2,12}\Big ) \\
& \ \ \ \ \ +\frac{J_{12}^3J_{2,4}}{\overline{J}_{0,3}}\cdot \frac{J_6}{J_{1,6}J_{12}^2}\cdot 
\Big ( J_{5,12}-qJ_{11,12}\Big ),
\end{align*}}%
where we have used (\ref{equation:1.10}).  Employing (\ref{equation:H1Thm1.2A}), (\ref{equation:H1Thm1.2B}), and (\ref{equation:j-split}), yields
{\allowdisplaybreaks \begin{align*}
f_{4,4,1}&(q^3,q^2,q)+qf_{4,4,1}(q^5,q^4,q)\\
&= q^{-1}\frac{J_{12}^3J_{1,4}\overline{J}_{3,12}}{\overline{J}_{0,3}\overline{J}_{0,12}\overline{J}_{1,12}}
\cdot \frac{J_6}{J_{2,6}J_{12}^2}\cdot 2q^2J_{2,24}J_{18,24} 
+\frac{J_{12}^3J_{1,4}\overline{J}_{3,12}}{\overline{J}_{0,3}\overline{J}_{0,12}\overline{J}_{5,12}}
\cdot \frac{J_6}{J_{2,6}J_{12}^2}
\cdot 2J_{6,24}J_{14,24} \\
& \ \ \ \ \ +\frac{J_{12}^3J_{2,4}}{\overline{J}_{0,3}}\cdot \frac{J_6}{J_{1,6}J_{12}^2}\cdot 
j(q;-q^3)\\
&= 2\frac{J_{12}^3J_{1,4}\overline{J}_{3,12}}{\overline{J}_{0,3}\overline{J}_{0,12}}
\cdot \frac{J_6J_{6,24}}{J_{2}J_{12}^2}\cdot \frac{J_{24}}{J_{12}^2}\cdot \Big ( qJ_{1,12}+J_{5,12}\Big )
 +\frac{J_{12}^3J_{2,4}}{\overline{J}_{0,3}}\cdot \frac{J_6}{J_{1,6}J_{12}^2}\cdot 
j(q;-q^3)\\
&= 2\frac{J_{12}^3J_{1,4}\overline{J}_{3,12}}{\overline{J}_{0,3}\overline{J}_{0,12}}
\cdot \frac{J_6J_{6,24}}{J_{2}J_{12}^2}\cdot \frac{J_{24}}{J_{12}^2}\cdot j(-q;-q^3)
 +\frac{J_{12}^3J_{2,4}}{\overline{J}_{0,3}}\cdot \frac{J_6}{J_{1,6}J_{12}^2}\cdot 
j(q;-q^3),
\end{align*}}%
where for the second equality we used (\ref{equation:1.12}), and for the third equality we used (\ref{equation:j-split}).  Applying (\ref{equation:1.11}) gives
{\allowdisplaybreaks \begin{align*}
f_{4,4,1}&(q^3,q^2,q)+qf_{4,4,1}(q^5,q^4,q)\\
&= 2\frac{J_{12}^3J_{1,4}\overline{J}_{3,12}}{\overline{J}_{0,3}\overline{J}_{0,12}}
\cdot \frac{J_6J_{6,24}}{J_{2}J_{12}^2}\cdot \frac{J_{24}}{J_{12}^2}\cdot \frac{\overline{J}_{1,6}J_{2,6}}{J_{3,12}}
 +\frac{J_{12}^3J_{2,4}}{\overline{J}_{0,3}}\cdot \frac{J_6}{J_{1,6}J_{12}^2}\cdot 
\frac{J_{1,6}\overline{J}_{2,6}}{J_{3,12}}\\
&=\frac{1}{2}J_{2}J_{6,12}+\frac{1}{2}J_{2}J_{6,12}\\
&=J_2J_{6,12},
\end{align*}}%
where the second equality follows from elementary product rearrangements.

\subsection{The string function $c_{51}^{51}-c_{15}^{51}$ (\ref{equation:KP-6C-orig})}  It suffices to show (\ref{equation:KP-6C-Hecke}):
\begin{equation}
f_{4,4,1}(-q^3,q^2,q)-qf_{4,4,1}(-q^5,q^4,q)=J_1^2.
\end{equation}
From Lemma \ref{lemma:f441-f331-evals}, we have
\begin{align*}
J_1^2
&=f_{4,4,1}(-q^{3},q,q)-qf_{4,4,1}(-q^{5},q^{3},q)\\
&=-qf_{4,4,1}(-q^7,q^2,q)+j(-q^3;q^4)+q^4f_{4,4,1}(-q^{9},q^{4},q)-qj(-q^5;q^4)\\
&=-qf_{4,4,1}(-q^5,q^4,q)+f_{4,4,1}(-q^{3},q^{2},q),
\end{align*}
where for the second equality we used (\ref{equation:f-shift}) with $(R,S)=(0,1)$, and for the third we used (\ref{equation:j-elliptic}) and (\ref{equation:f-flip}).


\section{Computing level $N=8$ string functions} \label{section:LevelN8} 
\subsection{The string function  $c_{62}^{44}$ (\ref{equation:KP-8A-orig})}  It suffices to show  (\ref{equation:KP-8A-Hecke}):
\begin{equation}
f_{5,5,1}(q^5,q^4,q)=J_{2}J_{10},
\end{equation}
but this is just the first identity in Lemma \ref{lemma:f441-f331-evals}.

\section{Computing level $N=10$ string functions} \label{section:LevelN10} 

\subsection{The string function $c_{91}^{55}$ (\ref{equation:KP-10B-orig})}  It suffices to show (\ref{equation:KP-10B-Hecke}):
\begin{equation}
f_{6,6,1}(q^6,q^4,q)=J_{4,10}J_{3,15}.
\end{equation}
We use Corollary \ref{corollary:f661-expansion}.  In (\ref{equation:h661}), we have
\begin{equation}
h_{6,6,1}(q^6,q^4,-1,-1)
=j(q^6;q^6)m( -q^{3},q^{5},-1 )
+j(q^4;q)m( q^{-3},q^{30},-1  )=0.
\end{equation}
Hence
{\allowdisplaybreaks \begin{align*}
f_{6,6,1}&(q^6,q^4,q)\\
&=-\frac{J_{30}^3}{\overline{J}_{0,5}\overline{J}_{0,30}}\sum_{d=0}^{5}
q^{5\binom{d+1}{2}}
 \cdot \frac{j\big (q^{5d+9};q^{6}\big )  j\big (-q^{27-5d};q^{30}\big )j\big (-q^{5d};q^{30}\big )}
{j\big (-q^{-3};q^{30})j(q^{5d+3};q^{30}\big )}\\
&=\frac{J_{30}^3}{\overline{J}_{0,5}\overline{J}_{0,30}\overline{J}_{3,30}}\Big [ 
\frac{J_{3,6}\overline{J}_{27,30}\overline{J}_{0,30}}{J_{3,30}}
- q^{-2} \cdot \frac{J_{2,6} \overline{J}_{22,30}\overline{J}_{5,30}}{J_{8,30}}\\
&\ \ \ \ \ +q^{-3} \cdot \frac{J_{1,6} \overline{J}_{17,30}\overline{J}_{10,30}}{J_{13,30}}
-q^{-3} \cdot \frac{J_{1,6} \overline{J}_{7,30}\overline{J}_{20,30}}{J_{23,30}}
+q^{-2} \cdot \frac{J_{2,6} \overline{J}_{2,30}\overline{J}_{25,30}}{J_{28,30}}
\Big ],
\end{align*}}%
where we have used (\ref{equation:j-elliptic}).  Using (\ref{equation:1.7}) and regrouping terms yields
{\allowdisplaybreaks \begin{align*}
f_{6,6,1}&(q^6,q^4,q)\\
&=\frac{J_{30}^3}{\overline{J}_{0,5}\overline{J}_{0,30}\overline{J}_{3,30}}\Big [ 
\frac{J_{3,6}\overline{J}_{27,30}\overline{J}_{0,30}}{J_{3,30}}
+q^{-2} \cdot J_{2} \overline{J}_{5,30}\cdot 
\frac{\overline{J}_{2,30}J_{8,30}-\overline{J}_{8,30}J_{2,30}}{J_{2,30}J_{8,30}}\\
& \ \ \ \ \ +q^{-3} \cdot J_{1,6} \overline{J}_{10,30}\cdot 
\frac{\overline{J}_{13,30}J_{7,30}-\overline{J}_{7,30}J_{13,30}}{J_{7,30}J_{13,30}}\Big ] \\
&=\frac{J_{30}^3}{\overline{J}_{0,5}\overline{J}_{0,30}\overline{J}_{3,30}}\Big [ 
\frac{J_{3,6}\overline{J}_{27,30}\overline{J}_{0,30}}{J_{3,30}}
+2\cdot 
\frac{J_{2} \overline{J}_{5,30}J_{6,60}J_{40,60}}{J_{2,30}J_{8,30}}
 -2q^{4} \cdot  
\frac{J_{1,6} \overline{J}_{10,30}J_{6,60}J_{50,60}}{J_{7,30}J_{13,30}}\Big ],
\end{align*}}%
where we have used (\ref{equation:H1Thm1.2A}).  We focus on the last two summands:
{\allowdisplaybreaks \begin{align*} 
&\frac{J_{2,6} \overline{J}_{5,30}J_{6,60}J_{40,60}}{J_{2,30}J_{8,30}}
 -q^{4} \cdot \frac{J_{1,6} \overline{J}_{10,30}J_{6,60}J_{50,60}}{J_{7,30}J_{13,30}}\\
&=J_{6,60}\overline{J}_{5,30}\overline{J}_{10,30}\cdot \frac{J_{60}}{J_{30}^2}\cdot \Big [ \frac{J_{2,6} J_{10,30}}{J_{2,30}J_{8,30}}
-q^{4} \cdot \frac{J_{1,6}J_{5,30}}{J_{7,30}J_{13,30}}\Big ] \\
&=J_{6,60}\overline{J}_{5,30}\overline{J}_{10,30}\cdot \frac{J_{6}}{J_{30}^5}
\cdot\frac{J_{60}}{J_{30}^2}\cdot \Big [ J_{14,30}J_{20,30}J_{26,30} J_{10,30}
-q^{4} \cdot J_{1,30}J_{19,30}J_{25,30}J_{5,30}\Big ].
\end{align*}}%
If we specialize Proposition \ref{proposition:Weierstrass} with $a=q^{15}$, $b=q^{10}$, $c=q^{6}$, $d=q^{5}$, we have
\begin{equation}
J_{16,30}J_{20,30}J_{26,30} J_{10,30}
+q^{4} \cdot J_{1,30}J_{11,30}J_{25,30}J_{5,30}=J_{21,30}J_{9,30}J_{15,30}J_{5,30}.
\end{equation}
Using (\ref{equation:1.7}) we can then write 
\begin{align*} 
f_{6,6,1}(q^6,q^4,q)
&=\frac{J_{30}^3}{\overline{J}_{0,5}}  \frac{J_{3,6}}{J_{3,30}}
+2\cdot  \frac{J_{6,60}\overline{J}_{5,30}\overline{J}_{10,30}}{\overline{J}_{0,5}\overline{J}_{0,30}\overline{J}_{3,30}}
\cdot \frac{J_{6}J_{60}}{J_{30}^4} \cdot \\
& \ \ \ \ \ \cdot \Big [ 2J_{16,30}J_{20,30}J_{26,30} J_{10,30}
- J_{9,30}J_{21,30}J_{15,30}J_{5,30}\Big ].
\end{align*}
The result follows from the two identities in Lemma \ref{lemma:f661-identities}.

\section*{Conclusion}
We have derived the expressions for even-level string functions for an irreducible highest weight module of  Kac--Moody Lie algebra $A_1^{(1)}$ in terms of double sums of the form $f_{K+1,K+1,1}(x,y,q)$. These double-sums may be expressed  in terms of  theta functions and Appell--Lerch functions: the building blocks of Ramanujan's  mock theta functions.

\section*{Acknowledgements}

This research was supported by the Theoretical Physics and Mathematics Advancement Foundation BASIS, agreement No. 20-7-1-25-1.  We would also like to thank O. Warnaar and E. Feigin for helpful comments and suggestions.

\end{document}